\def\p{\prime}
\def\pp{{\prime\prime}}
\def\bZ{\mathbb Z}
\def\bQ{\mathbb Q}
\def\inj{\hookrightarrow}
\def\es{\emptyset}
\def\bop{\bigoplus}
\def\sur{\twoheadrightarrow}
\def\sm{\sigma}
\def\imp{\Rightarrow}
\def\t{\times}
\def\iff{\Leftrightarrow}
\def\xrw{\xrightarrow}
\def\de{\delta}
\def\De{\Delta}
\def\Ga{\Gamma}
\def\ga{\gamma}
\DeclareMathOperator\sgn{\mathrm{sgn}}
\def\ep{\epsilon}
\def\ze{\zeta}
\DeclareMathOperator\rk{\mathrm{rank}}
\def\der{\partial}
\DeclareMathOperator\MC{\mathrm{MC}}
\DeclareMathOperator\MH{\mathrm{MH}}
\DeclareMathOperator\Shr{\mathrm{Shr}}
\DeclareMathOperator\Dod{\mathrm{Dod}}
\DeclareMathOperator\Des{\mathrm{Des}}
\newcommand{\rom}[1]{\textup{\uppercase\expandafter{\romannumeral#1}}}
\newtheorem{thm}{Theorem}[section]
\newtheorem{lemma}[thm]{Lemma}
\newtheorem{prop}[thm]{Proposition}
\newtheorem{coro}[thm]{Corollary}
\theoremstyle{definition}
\newtheorem{defn}[thm]{Definition}
\newtheorem{eg}[thm]{Example}
\newtheorem{notn}[thm]{Notation}
\newtheorem{rmk}[thm]{Remark}
\begin{document}
  \title{Graph magnitude homology via algebraic Morse theory}
  \author{Yuzhou Gu}
  \begin{abstract}
  We compute magnitude homology of various graphs using algebraic Morse theory. 
  Specifically, we (1) give an alternative proof that trees are diagonal, (2) identify a new class of diagonal graphs, (3) prove that the icosahedral graph is diagonal, and (4) compute the magnitude homology of cycles. These results answer several questions of Hepworth and Willerton \cite{HW17}.
  \end{abstract}
  \maketitle
  \tableofcontents

  \section{Introduction}
  \subsection{Background}
  The magnitude of a finite metric space is a cardinality-like invariant defined and first studied by Leinster \cite{Lei13}. It is a special case of a general theory of magnitude of an enriched category, and has found applications in areas like biodiversity (e.g., Leinster and Cobbold \cite{LC12}).
  
  A finite graph\footnote{In this paper, we consider only finite simple undirected graphs. We often also assume that a graph is connected.} naturally gives rise to a finite metric space, and therefore has magnitude associated with it.
  The magnitude of a graph can be represented as a power series with integer coefficients.
  Leinster \cite{Lei17} studied magnitude of graphs and proved many interesting properties, such as multiplicativity with respect to Cartesian products, inclusion-exclusion formula under certain conditions, and invariance under Whitney twists with adjacent gluing points.
  
  Magnitude admits a categorification, called magnitude homology, in the sense that a coefficient of the power series is the Euler characteristic of corresponding homology groups.
  Magnitude homology is defined for graphs by Hepworth and Willerton \cite{HW17} and for enriched categories by Leinster and Shulman \cite{LS17}.
  Hepworth and Willerton proved that magnitude homology admits properties that categorify properties of magnitude. A K\"unneth theorem categorifies multiplicativity, and a Mayer-Vietoris theorem categorifies inclusion-exclusion formula.\footnote{We should note that it is still open whether magnitude homology is invariant under Whitney twists with adjacent gluing points.}
  They also proved a theorem which essentially computes the magnitude homology of joins of graphs.
  
  Using these theorems, Hepworth and Willerton are able to compute magnitude homology of many graphs, including trees, complete multipartite graphs, and so on.
  On the other hand, it turns out magnitude homology of graphs can be difficult to compute, even for very simple graphs. Based on computer computations of the first homology groups, Hepworth and Willerton made explicit conjectures for cycle graphs and the icosahedral graph.
  
  Algebraic Morse theory, developed independently by J\"ollenbeck \cite{Jol05} and by Sk\"oldberg \cite{Sko06}, is a useful combinatorial tool for homology computations. Many successful computations have been done using algebraic Morse theory, such as cohomology of certain nilpotent Lie algebras by Sk\"oldberg, and Hochschild homology of certain algebras by J\"ollenbeck.
  
  \subsection{Our results}
  In this paper, we use algebraic Morse theory to compute magnitude homology of various graphs.
  \subsubsection{Trees}
  Hepworth and Willerton proved that trees are diagonal (\cite{HW17}, Corollary 31). Their proof relies on a Mayer-Vietoris theorem, which can take some effort to prove. As a warmup for more complicated computations, we give another proof of this fact using algebraic Morse theory (Proposition \ref{PropTree}).
  \subsubsection{A new class of diagonal graphs}
  Hepworth and Willerton proved that joins of graphs are diagonal (\cite{HW17}, Theorem 37).
  We define a new class of graphs, named pawful graphs (Definition \ref{DefnPawful}), which strictly contains the class of joins, and prove that pawful graphs are diagonal (Theorem \ref{ThmPawful}).
  \subsubsection{Icosahedral graph}
  Based on computer computations, Hepworth and Willerton \cite{HW17} conjectured that the icosahedral graph is diagonal. We prove this using algebraic Morse theory (Theorem \ref{ThmIco}).

  \subsubsection{Cycles}
  Based on computer computations, Hepworth and Willerton \cite{HW17} made conjectures about ranks of magnitude homology groups of cycles.
  We prove their conjectures by computing using algebraic Morse theory (Theorem \ref{ThmOddCycle} for odd cycles and Theorem \ref{ThmEvenCycle} for even cycles).
  \subsubsection{Magnitude homology is stronger than magnitude}
  Hepworth and Willerton \cite{HW17} asked whether there exist graphs with the same magnitude but different magnitude homology. In Appendix \ref{SecMHvsMag}, we answer the question in the affirmative by giving explicit examples.
  
  \subsubsection{Geodetic ptolemaic graphs}
  Slightly generalizing the proof of Proposition \ref{PropTree}, we prove that graphs that are both geodetic and ptolemaic are diagonal. However, this does not give new diagonal graphs, because a graph is both geodetic and ptolemaic if and only if it is a block graph, whose diagonality follows from Mayer-Vietoris.
  We study geodetic ptolemaic graphs in Appendix \ref{SecGeoPto}.
  
  
  \subsection{Organization of the paper}
  The paper is organized as follows. In Section \ref{SecPre} we recall basic definitions and results about magnitude homology and algebraic Morse theory.
  In Section \ref{SecDes} we study special kinds of matchings, and prove several results that can simplify proofs of correctness of matchings.
  In Section \ref{SecComp} we carry out the computations and prove the main results.
  In Appendix \ref{SecMHvsMag} we give examples of graphs with the same magnitude but different magnitude homology.
  In Appendix \ref{SecGeoPto} we prove using algebraic Morse theory that graphs that are both geodetic and ptolemaic are diagonal, and explain why this does not give a new class of diagonal graphs.
  
  \subsection{Acknowledgements}
  The author is partially supported by Jacobs Family Presidential Fellowship during the preparation of this paper.
  The author would like to thank Richard Hepworth, Yury Polyanskiy, and Simon Willerton for helpful discussions.
  
  \section{Preliminaries}\label{SecPre}
  In this section we review necessary definitions and results regarding magnitude homology and algebraic Morse theory.
  We do not state them in full generality, but in a generality that suffices for our purposes.
  \subsection{Magnitude homology}
  This part follows Hepworth and Willerton \cite{HW17}.
  Let $G$ be a finite simple undirected connected graph.
  For a sequence of vertices $(x_0,\ldots,x_k)$, let $$\ell(x_0,\ldots,x_k) = \sum_{0\le i\le k-1} d(x_i,x_{i+1}).$$
  \begin{defn}[Magnitude homology]
  The magnitude chain complex $\MC_{*,*}(G)$ is defined as
  \begin{align*}
  \MC_{k,l}(G) = \bZ \{(x_0,\ldots,x_k)\in V(G)^{k+1}: x_i\ne x_{i+1}\forall i, \ell(x_0,\ldots,x_k)=l\}
  \end{align*}
  with differential $\der: \MC_{*,*}(G)\to \MC_{*-1,*}(G)$ defined by
  \begin{align*}
  \der (x_0,\ldots,x_k) = \sum_{1\le i\le k-1} (-1)^i \der_i (x_0,\ldots,x_k)
  \end{align*}
  where
  \begin{align*}
  &\der_i (x_0,\ldots,x_k) \\
  &= \left\{\begin{array}{ll}(x_0,\ldots,\hat x_i,\ldots,x_k)& \text{if }\ell(x_0,\ldots,\hat x_i,\ldots,x_k) = \ell(x_0,\ldots,x_k),\\ 0 & \text{otherwise.}\end{array}\right.
  \end{align*}
  The magnitude homology $\MH_{*,*}(G)$ is defined as
  \begin{align*}
  \MH_{k,l}(G) = H_k(\MC_{*,l}(G)).
  \end{align*}
  \end{defn}
  
  Hepworth and Willerton proved many interesting properties of magnitude homology. In this paper, we perform computations starting from the magnitude chain complex, so we do not need most of the properties.
  
  We recall the following definition.
  \begin{defn}[Diagonal graphs]
  A diagonal graph is a graph whose magnitude homology is diagonal, i.e., $\MH_{k,l}(G)\ne 0$ only if $k=l$.
  \end{defn}
  Diagonal graphs are interesting because their magnitude homology is completely determined by magnitude (\cite{HW17}, Proposition 34). In contrast, graphs in general can have the same magnitude but different magnitude homology, as shown in Appendix \ref{SecMHvsMag}.
  
  Hepworth and Willerton proved that trees are diagonal (op.~cit., Corollary 31) and that joins of (non-empty) graphs are diagonal (op.~cit., Theorem 37). Recall that the join $G\star H$ of two graphs $G$ and $H$ has vertex set $V(G\star H) = V(G) \coprod V(H)$ and edge set $E(G\star H) = E(G) \coprod E(H) \coprod (V(G) \t V(H))$.
  
  \subsection{Algebraic Morse theory}
  This part follows Sk\"oldberg \cite{Sko06} and Lampret and Vavpeti\v{c} \cite{LV16}.
  Fix a commutative ring $R$. (We use $R = \bZ$ throughout this paper.)
  Let $C_*$ be a chain complex of $R$-modules $$\cdots \to C_{k+1} \xrw{\der_{k+1}} C_k \xrw{\der_k} C_{k-1} \to \cdots.$$
  Assume that for each $k\in \bZ$, we have a direct sum decomposition $C_k = \bop_{i\in I_k} C_{k,i}$ where $I_k$ is the index set.
  For our purpose, we assume that $I_k$ is finite and each $C_{k,i}$ is isomorphic to $R$.
  For $i\in I_k$ and $j\in I_{k-1}$, let $\der_{k,i,j}$ denote the composition $$C_{k,i} \inj C_k \xrw{\der_k} C_{k-1} \sur C_{k-1, j},$$ where the first map is the obvious inclusion, and the third map is the obvious projection.
  We assume that each $\der_{k,i,j}$ is either $0$ or an isomorphism.
  Define $\Ga_{C_*}$ to be the directed graph with vertex set $\coprod_{k\in \bZ} I_k$ and edge set $$\coprod_{k\in \bZ} \{(i,j):i\in I_k, j\in I_{k-1},\der_{k,i,j}\ne 0\}.$$
  \begin{defn}[Morse matching]
  Let $M$ be a (not necessarily perfect) matching of $\Ga_{C_*}$.
  Let $\Ga_{C_*}^M$ denote the graph $\Ga_{C_*}$ with edges in $M$ having reversed direction.
  Then $M$ is called a Morse matching if $\Ga_{C_*}^M$ is acyclic.
  \end{defn}
  \begin{rmk}\label{RmkMorseCycle}
  If there exists a cycle in $\Ga_{C_*}^M$, then it must be of the form 
  $$\begin{tikzcd}[column sep=small, row sep=small]
  a_1\arrow{d} & a_2 \arrow{d} & \cdots \arrow{d} & a_p \arrow{d} & a_{p+1} = a_1\\
  b_1\arrow{ru} & b_2 \arrow{ru} & \cdots \arrow{ru} & b_p \arrow{ru} &
  \end{tikzcd}$$
  where $a_i\in I_k$, $b_i\in I_{k-1}$ for all $i$, for some fixed $k$. (Vertex $b_i$ and $a_{i+1}$ are matched.)
  \end{rmk}
  
  Given a Morse matching $M$, we can find a smaller chain complex homotopy equivalent to $C_*$.
  Let $I^\circ_k$ denote the set of vertices in $I_k$ unmatched in $M$.
  We define a chain complex $C^\circ_*$ with $C^\circ_k = \bop_{i\in I^\circ_k} C_{k,i}$.
  Let us describe the differential $\der^\circ: C^\circ_* \to C^\circ_{*-1}$.
  For $u\in I^\circ_k$, $v\in I^\circ_{k-1}$, let 
  $\Ga_{u,v}^M$ denote the set of paths
  $$\ga = (u=v_1\to v_2 \to \cdots \to v_{2r}=v)$$ in 
  $\Ga_{C_*}^M$ with 
  $v_{2i-1} \in I_k$, $v_{2i}\in I_{k-1}$ for all $i$.
  For such a path, we define $\der^\circ_\ga: C_{k,u}\to C_{k-1,v}$ as
  \begin{align*}
  \der^\circ_\ga = (-1)^{r-1} \der_{k,v_{2r-1},v_{2r}} \der^{-1}_{k,v_{2r-1},v_{2r-2}} \cdots \der_{k,v_3,v_4}\der^{-1}_{k,v_3,v_2} \der_{k,v_1,v_2}.
  \end{align*}
  Then the differential $\der^\circ$ restricted on $C_{k,u}$, $u\in I^\circ_k$ is defined as
  \begin{align*}
  \der^\circ|_{C_{k,u}} = \sum_{v\in I_{k-1}^\circ, \ga \in \Ga_{u,v}^M} \der^\circ_\ga.
  \end{align*}
  This determines the differential $\der^\circ: C^\circ_*\to C^\circ_{*-1}$.
  It turns out that $(C^\circ_*,\der^\circ)$ is a chain complex, and furthermore, we have the following theorem.
  \begin{thm}\label{ThmMorse}
  The chain complex $(C^\circ_*, \der^\circ)$ is homotopy equivalent to $(C_*, \der)$.
  \end{thm}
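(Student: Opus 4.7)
The plan is to prove the homotopy equivalence by constructing explicit chain maps $\iota : C^\circ_* \to C_*$ and $\pi : C_* \to C^\circ_*$, both given by sums over zigzag paths in $\Ga_{C_*}^M$ in the spirit of the formula for $\der^\circ_\ga$, together with an explicit chain homotopy $s$ showing that $\iota\pi$ is homotopic to $\id_{C_*}$.

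First, I would set up a decomposition $C_k = C^\circ_k \oplus C^B_k \oplus C^D_k$, where $C^B_k$ is the sum of $C_{k,u}$ over $u \in I_k$ matched to a vertex of $I_{k-1}$, and $C^D_k$ is the sum over $u \in I_k$ matched to a vertex of $I_{k+1}$. The hypothesis that each $\der_{k,i,j}$ is either zero or an isomorphism implies that the component of $\der_k$ from $C^B_k$ to $C^D_{k-1}$ is an isomorphism, built block-diagonally from the $\der_{k,u,v}$ over matched pairs $(u,v)$. The chain homotopy $s_k : C_k \to C_{k+1}$ is then defined on $C_{k,v}$, for $v \in I_k$ matched to some $u \in I_{k+1}$, as $\der^{-1}_{k+1,u,v}$, and as zero on all other summands.

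Next I would define $\iota$ on $C_{k,u}$ for $u \in I^\circ_k$ as a sum over all zigzag paths in $\Ga_{C_*}^M$ starting at $u$ and ending at some $w \in I_k$ after an even number of steps, with the corresponding component being the appropriate alternating composition of $\der_{k,v_{2i-1},v_{2i}}$'s and their matched-edge inverses; the map $\pi$ is defined symmetrically, going from $C_*$ down to $C^\circ_*$. Acyclicity of $\Ga_{C_*}^M$ together with finiteness of each $I_k$ makes these sums finite. I would then verify, in order: (i) $\pi \iota = \id$ on $C^\circ_*$, which is immediate because the only zigzag path between unmatched vertices at the same level is the trivial one; (ii) $\iota$ and $\pi$ are chain maps and, as a byproduct, $(\der^\circ)^2 = 0$; (iii) $\id_{C_*} - \iota \pi = s\der + \der s$.

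The main obstacle is proving (ii) and (iii). Each identity, when expanded, becomes a sum over pairs or extensions of zigzag paths, and the claim is that it telescopes. The central combinatorial move is to use $\der^2 = 0$ on $C_*$ to pair off contributions by inserting or removing a length-two detour through a matched edge; the hypothesis that each $\der_{k,i,j}$ is $0$ or an isomorphism, together with acyclicity of $\Ga_{C_*}^M$, ensures every such detour is well-defined and that no infinite chain of pairings arises. As an alternative route, one could exploit acyclicity of $\Ga_{C_*}^M$ to perform a topological sort of the matched pairs and cancel them one at a time, at each step quotienting out a contractible two-term subcomplex, which is a standard homotopy equivalence; iterating then yields the theorem.
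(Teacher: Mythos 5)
The paper does not actually prove Theorem~\ref{ThmMorse}: it is quoted from the algebraic Morse theory literature (Sk\"oldberg \cite{Sko06}, Lampret--Vavpeti\v{c} \cite{LV16}) without an in-paper argument, so there is nothing to compare your proof against. Judged on its own, your proposal follows the right general outline but contains a genuine error in the formula for the homotopy, plus a secondary incorrect claim.

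The main problem is the homotopy $s$. Defining $s_k$ on $C_{k,v}$, for $v$ matched upward to $u \in I_{k+1}$, simply as $\der_{k+1,u,v}^{-1}$ (and as zero elsewhere) does not satisfy $\id_{C_*}-\iota\pi = s\der + \der s$: the correct homotopy in algebraic Morse theory is itself a zigzag sum of the same kind as $\der^\circ$, not just the inverse over the matched edges. A minimal counterexample: take $I_{k+1}=\{z_1,z_2\}$, $I_k=\{x_1,x_2\}$, with $z_i$ matched to $x_i$, and $\der(z_1)=x_1+x_2$, $\der(z_2)=x_2$, all other differentials zero. Here $\Ga^M_{C_*}$ is the acyclic path $x_1 \to z_1 \to x_2 \to z_2$, every index is matched, so $C^\circ_*=0$ and $\iota\pi=0$. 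But with your $s$ one gets $(\der s + s\der)(x_1)=\der(z_1)=x_1+x_2\ne x_1$. The correct homotopy value is $s(x_1)=z_1 - z_2$, where the correction term $-z_2$ comes precisely from the zigzag $x_1\to z_1\to x_2\to z_2$. So steps (ii) and (iii) of your verification cannot go through as written; you would need to redefine $s$ as an alternating sum over zigzag paths (equivalently, run a homological perturbation argument in which $s$ is the resulting geometric series), which is essentially the content of the theorem.

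A secondary point: the assertion that the component $\der_k\colon C^B_k \to C^D_{k-1}$ is ``built block-diagonally from the $\der_{k,u,v}$ over matched pairs'' is false in general; for $u\in C^B_k$ matched to $v\in C^D_{k-1}$ and $v'\in C^D_{k-1}$ with $v'\ne v$, nothing in the hypotheses forces $\der_{k,u,v'}=0$ (the example above has exactly such a nonzero off-diagonal entry). The hypothesis that every $\der_{k,i,j}$ is $0$ or an isomorphism is by itself not enough to make this block invertible --- that is precisely where acyclicity of $\Ga^M_{C_*}$ enters, by forcing the off-diagonal part to be strictly triangular with respect to a topological sort. The same topological sort is what makes your alternative inductive route (cancel one matched pair at a time, quotienting a contractible two-term subcomplex) work; that route is correct and is essentially the standard proof, but it still owes the reader the bookkeeping showing that after cancelling all pairs the induced differential on $C^\circ_*$ equals the stated zigzag formula for $\der^\circ$, since that is where the zigzags actually come from.
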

  \section{Description of matchings}\label{SecDes}
  Morse matchings are useful for simplifying a chain complex. However, it can sometimes be cumbersome to describe a Morse matching and to prove its correctness. In this section we study special kinds of Morse matchings for magnitude chain complexes that are easier to deal with.
  
  Strictly speaking, the magnitude chain complex is not a single chain complex, but one chain complex for each $l$. Nevertheless, we treat these chain complexes uniformly. By a Morse matching of $\MC_{*,*}(G)$, we mean a Morse matching of $\MC_{*,l}(G)$ for each $l$.
  
  The index set we use is $$I_{k,l}(G) = \{(x_0,\ldots,x_k)\in V(G)^{k+1}, x_i\ne x_{i+1}\forall i, \ell(x_0,\ldots,x_k)=l\}.$$
  In the following, by ``a sequence $(x_0,\ldots,x_k)$'' we mean a sequence $(x_0,\ldots,x_k)\in I_{k,l}(G)$.
  
  First we introduce the notion of matching states.
  \begin{notn}[Matching state]
  Fix a (not necessarily Morse) matching of $\MC_{*,*}(G)$.
  The matching state of a sequence $(x_0,\ldots,x_k)$ is one of the following:
  \begin{enumerate}
  \item unmatched, if it is not matched to another sequence;
  \item insert($i,v$), if it is matched to the sequence $(x_0,\ldots,x_i,v,x_{i+1},\ldots,x_k)$;
  \item delete($i$), if it is matched to the sequence $(x_0,\ldots,\hat x_i,\ldots,x_k)$.
  \end{enumerate}
  \end{notn}
  
  In this paper, all the matchings we construct satisfy the property that (roughly speaking) the matching state of a matched sequence is determined by a prefix of it. Therefore we make the following definition.
  \begin{defn}[Prefix matching]
  A prefix matching of $\MC_{*,*}(G)$ is a matching satisfying the following properties.
  \begin{enumerate}
  \item If $(x_0,\ldots,x_k)$ has matching state insert($i, v$), then any sequence of the form $(x_0,\ldots,x_{i+1},y_{i+2},\ldots,y_{k^\p})$ has matching state insert($i, v$).
  \item If $(x_0,\ldots,x_k)$ has matching state delete($i$), then any sequence of the form $(x_0,\ldots,x_{i+1},y_{i+2},\ldots,y_{k^\p})$ has matching state delete($i$).
  \end{enumerate}
  In other words, for a sequence $(x_0,\ldots,x_k)$, the prefix $(x_0,\ldots,x_{i+1})$ determines whether the sequence has matching state insert($i, v$), delete($i$), or neither.
  \end{defn}
  
  In order to make it easy to check the correctness of the description of a prefix matching, we make the following definition.
  \begin{defn}[Matching rule]\label{DefnMatRule}
  A matching rule is a function $F$ that maps a finite sequence $(x_0,\ldots,x_k)$ of vertices to one of the following symbols:
  \begin{enumerate}
  \item $\ep$, meaning ``idle'';
  \item $\iota(v)$ for some $v\in V(G)$, meaning insert($*$, $v$).
  \item $\de$, meaning delete($*$);
  \end{enumerate}
  
  Let $M$ be a prefix matching. We say $M$ is the prefix matching generated by $F$ if for any sequence $(x_0,\ldots,x_k)$ with unmatched prefix $(x_0,\ldots,x_{k-1})$, the matching state of $(x_0,\ldots,x_k)$ is
  \begin{enumerate}
  \item insert($k-1$, $v$) if and only if $F(x_0,\ldots,x_k) = \iota(v)$;
  \item delete($k-1$) if and only if $F(x_0,\ldots,x_k) = \de$.
  \end{enumerate}
  
  Note that there does not always exist a prefix matching generated by $F$, because we have not put any restrictions on $F$.
  
  We say a matching rule is valid if it satisfies the following properties.
  \begin{enumerate}
  \item If $F(x_0,\ldots,x_k) = \iota(v)$, then $d(x_{k-1},v)+d(v,x_k)=d(x_{k-1},x_k)$, $F(x_0,\ldots,x_{k-1}, v) = \ep$ and $F(x_0,\ldots,x_{k-1}, v, x_k) = \de$.
  \item If $F(x_0,\ldots,x_k) = \de$, then $d(x_{k-2},x_{k-1})+d(x_{k-1},x_k)=d(x_{k-2},x_k)$, and $F(x_0,\ldots,x_{k-2},x_k) = \iota(x_{k-1})$.
  \end{enumerate}
  \end{defn}
  \begin{rmk}
  It is easy to see that in the definition of matching rules, value of $F(x_0,\ldots,x_k)$ is meaningful only when $(x_0,\ldots,x_{k-1})$ is unmatched. So we can assume that $F(x_0,\ldots,x_k)=\ep$ if $(x_0,\ldots,x_j)$ is matched for some $j<k$.
  
  For simplicity, when we describe a matching rule, we usually omit sequences on which its value is $\ep$. However, we never omit any sequences on which the value is $\de$ or $\iota(*)$.
  \end{rmk}
  \begin{lemma}\label{LemmaValidMatRule}
  If $F$ is a valid matching rule, then there exists a prefix matching generated by $F$.
  \end{lemma}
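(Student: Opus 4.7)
The plan is to construct the prefix matching $M$ explicitly from the rule $F$, then verify that it has the advertised properties. For each basis element $s=(x_0,\ldots,x_k)\in I_{k,l}(G)$, let $j(s)$ denote the smallest index $j\in\{1,\ldots,k\}$ with $F(x_0,\ldots,x_j)\neq\ep$, and set $j(s)=\infty$ if no such index exists. I would leave $s$ unmatched whenever $j(s)=\infty$; when $j(s)=j<\infty$, I would pair $s$ with $(x_0,\ldots,x_{j-1},v,x_j,\ldots,x_k)$ if $F(x_0,\ldots,x_j)=\iota(v)$ (inserting $v$ just before $x_j$), and with $(x_0,\ldots,x_{j-2},x_j,\ldots,x_k)$ if $F(x_0,\ldots,x_j)=\de$ (deleting $x_{j-1}$).

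The main content is showing that this pairing is a well-defined involution on $I_{*,l}(G)$. In the insertion case with partner $s'=(x_0,\ldots,x_{j-1},v,x_j,\ldots,x_k)$, validity condition (1) provides exactly what is needed: the distance equality $d(x_{j-1},v)+d(v,x_j)=d(x_{j-1},x_j)$ confirms $\ell(s')=l$, while $F(x_0,\ldots,x_{j-1},v)=\ep$ and $F(x_0,\ldots,x_{j-1},v,x_j)=\de$ force $j(s')=j+1$ with triggering value $\de$, so applying the rule to $s'$ deletes $v$ and returns $s$. The deletion case is symmetric: validity condition (2) gives the distance equality needed for $\ell(s')=l$ and forces $j(s')=j-1$ on the shorter sequence $s'=(x_0,\ldots,x_{j-2},x_j,\ldots,x_k)$, with triggering value $\iota(x_{j-1})$, so the rule applied to $s'$ reinserts $x_{j-1}$ and recovers $s$. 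This is really the only nontrivial step.

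The prefix matching property is then immediate: extending $s$ beyond position $k$ does not alter $F$ on any of its existing prefixes, so $j(\cdot)$ and the triggering value are preserved, and the matching state (insert($j-1,v$) or delete($j-1$)) is inherited unchanged by all extensions of $(x_0,\ldots,x_{j(s)})$. Finally, if the prefix $(x_0,\ldots,x_{k-1})$ is unmatched under $M$, then by minimality of $j(\cdot)$ we must have $F(x_0,\ldots,x_{j'})=\ep$ for every $j'\le k-1$; consequently, for $(x_0,\ldots,x_k)$ the only potential new trigger is $F(x_0,\ldots,x_k)$ itself, so its matching state is read off directly as insert($k-1,v$) when $F(x_0,\ldots,x_k)=\iota(v)$ and as delete($k-1$) when $F(x_0,\ldots,x_k)=\de$, which is precisely the condition for $M$ to be generated by $F$.

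The main obstacle, such as it is, consists entirely of the bookkeeping in the involution verification. Since validity conditions (1) and (2) have been engineered to be exactly symmetric to one another, this check reduces to the direct unwinding indicated above, and no further combinatorial input is required.
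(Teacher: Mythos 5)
Your proposal is correct and follows essentially the same approach as the paper: construct the matching by locating the minimal index $j$ with $F(x_0,\ldots,x_j)\ne\ep$, pair according to whether the trigger is $\iota(v)$ or $\de$, and use the two validity conditions to verify that the pairing is an involution (and hence a well-defined matching). Your write-up is somewhat more explicit than the paper's about why $j(\cdot)$ is preserved under extension and about how the ``generated by $F$'' clause of Definition \ref{DefnMatRule} follows, but the underlying argument is identical.
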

  \begin{proof}
  Fix a sequence $(x_0,\ldots,x_k)$. Let $j$ be the smallest number such that $F(x_0,\ldots,x_{j}) \ne \ep$.
  If such $j$ does not exist, then $(x_0,\ldots,x_k)$ is unmatched.
  
  If $F(x_0,\ldots,x_{j})=\iota(v)$, then $(x_0,\ldots,x_k)$ is matched to $(x_0,\ldots,x_{j-1}, v, x_{j}, \ldots,x_k)$.
  By valid property (1) in Definition \ref{DefnMatRule}, we have $F(x_0,\ldots,x_{j-1}, v) = \ep$ and $F(x_0,\ldots,x_{j-1}, v, x_{j}) = \de$, so $(x_0,\ldots,x_{j-1}, v, x_{j},\ldots,x_k)$ is matched to $(x_0,\ldots,x_k)$.
  
  If $F(x_0,\ldots,x_{j}) = \de$, then $(x_0,\ldots,x_k)$ is matched to $(x_0,\ldots,x_{j-2}, x_{j}, \ldots, x_k)$.
  By valid property (2) in Definition \ref{DefnMatRule}, we have $F(x_0,\ldots,x_{j-2}, x_{j}) = \iota(x_{j-1})$, so $(x_0,\ldots,x_{j-2}, x_{j},\ldots,x_k)$ is matched to $(x_0,\ldots,x_k)$.
  
  Therefore under the matching rule, every sequence is matched to at most one other sequence. So this gives a valid prefix matching.
  \end{proof}
  
  With Lemma \ref{LemmaValidMatRule}, we can verify that a matching rule generates a prefix matching. In order to apply to algebraic Morse theory, we would like to know when a matching rule generates a Morse matching.
  It turns out that this problem is not easy.
  \begin{eg}
  There exist prefix matchings that are not Morse matchings.
  \begin{figure}[h]
  \includegraphics[scale=0.5]{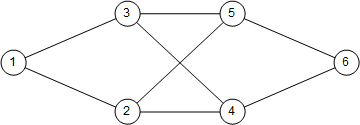}
  \caption{Example graph for a non-Morse matching}
  \label{FigNonMorse}
  \end{figure}
  Consider the graph in Figure \ref{FigNonMorse} and the following matching rule.
  \begin{enumerate}
  \item $F(1, 4) = \iota(2)$, $F(1, 2, 4) = \de$;
  \item $F(1, 5) = \iota(3)$, $F(1, 3, 5) = \de$;
  \item $F(1, 2, 6) = \iota(5)$, $F(1, 2, 5, 6) = \de$;
  \item $F(1, 3, 6) = \iota(4)$, $F(1, 3, 4, 6) = \de$.
  \end{enumerate}
  One can verify that this is a valid matching rule, and therefore generates a prefix matching.
  However, this is not a Morse matching, because we have the following zig-zag cycle.
  $$\begin{tikzcd}[column sep=0em]
  (1, 2, 4, 6)\arrow{d} & (1, 2, 5, 6) \arrow{d} & (1, 3, 5, 6) \arrow{d} & (1, 3, 4, 6) \arrow{d} & (1, 2, 4, 6)\\
  (1, 2, 6)\arrow{ru} & (1, 5, 6) \arrow{ru} & (1, 3, 6) \arrow{ru} & (1, 4, 6) \arrow{ru} &
  \end{tikzcd}$$
  \end{eg}
  
  Nevertheless, there are some general facts that can simplify Morse-ness proofs.
  \begin{lemma}\label{LemmaNonMorse}
  Fix a prefix matching $M$. Suppose in the graph $\Ga_{\MC_{*,*}(G)}^M$, there is a directed cycle
  $$\begin{tikzcd}[column sep=small, row sep=small]
  a_1\arrow{d} & a_2 \arrow{d} & \cdots \arrow{d} & a_p \arrow{d} & a_{p+1} = a_1\\
  b_1\arrow{ru} & b_2 \arrow{ru} & \cdots \arrow{ru} & b_p \arrow{ru} &
  \end{tikzcd}$$
  (Recall Remark \ref{RmkMorseCycle}.)
  
  Say $a_i$ transforms into $b_i$ by delete($d_i$), and $b_i$ transforms into $a_{i+1}$ by insert($c_i$, $u_i$).\footnote{We abuse notation from matching states. Note that $a_i$ has matching state delete($c_{i-1}+1$) and $b_i$ has matching state insert($c_i$, $u_i$).}
  Then we have $d_{i+1} \ne c_i+1$, $d_{i+1}\le c_i+2$, and $d_i\le c_i+1$.
  
  (Because this is a cycle, all indices are mod $p$. That is, $a_{p+i}=a_i$, $b_{p+i}=b_i$, $c_{p+i}=c_i$, $d_{p+i}=d_i$.)
  \end{lemma}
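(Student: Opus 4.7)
The central tool will be the prefix matching property: if a sequence has matching state insert$(i, v)$ or delete$(i)$, then every other sequence of length at least $i+2$ sharing its prefix of length $i+2$ has the same matching state. From the cycle structure, $b_i$ has state insert$(c_i, u_i)$ (matched to $a_{i+1}$), $a_{i+1}$ has the corresponding state delete$(c_i+1)$, and likewise $a_i$ has state delete$(c_{i-1}+1)$. For $d_{i+1} \ne c_i+1$: if equality held, then $b_{i+1} = \der_{c_i+1} a_{i+1} = b_i$, so the forward edge $a_{i+1} \to b_{i+1}$ in the cycle would coincide with the matching edge $a_{i+1} \to b_i$; but the matching edge is reversed in $\Ga^M$, so it is unavailable as a forward arrow, a contradiction.

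For $d_{i+1} \le c_i + 2$: assume for contradiction $d_{i+1} \ge c_i + 3$. Since $b_{i+1}$ is obtained from $a_{i+1}$ by deleting position $d_{i+1}$, the two sequences agree on positions $0$ through $c_i + 2$, i.e., on the prefix of length $c_i + 3$ that triggers $a_{i+1}$'s state delete$(c_i + 1)$. Prefix matching then forces $b_{i+1}$ to inherit the state delete$(c_i + 1)$. But $b_{i+1}$ is matched to $a_{i+2}$ via insert$(c_{i+1}, u_{i+1})$, contradicting uniqueness of the matching state.

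For $d_i \le c_i + 1$: assume for contradiction $d_i \ge c_i + 2$. Then $a_i$ and $b_i$ agree on positions $0$ through $c_i + 1$, which is exactly the trigger prefix of length $c_i + 2$ for $b_i$'s state insert$(c_i, u_i)$. Prefix matching forces $a_i$ to inherit state insert$(c_i, u_i)$, contradicting $a_i$'s known state delete$(c_{i-1} + 1)$.

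The main subtlety throughout is bookkeeping the off-by-one offsets: a state involving index $i$ is triggered at prefix length $i + 2$, and deleting position $p$ from a sequence preserves positions $0$ through $p - 1$. Once these are tracked, each inequality reduces either to the reversal of a matching edge (first inequality) or to a direct application of prefix inheritance (second and third). I expect no deep obstacle beyond careful indexing, since the $+1$ arising from the insert-delete correspondence and the $+2$ coming from the trigger length produce multiple places where off-by-one errors could sneak in.
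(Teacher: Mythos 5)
Your proof is correct and follows essentially the same argument as the paper: part (1) identifies $b_{i+1}=b_i$ as forcing a forward traversal of the reversed matching edge, and parts (2) and (3) both use the prefix-matching property to propagate $a_{i+1}$'s delete state to $b_{i+1}$ (resp.\ $b_i$'s insert state to $a_i$) and derive a clash with the known matching. The indexing (trigger prefix of length $i+2$ for state index $i$, positions $0$ through $p-1$ preserved by deleting $p$) is handled correctly throughout.
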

  \begin{proof}
  Write $a_i = (a_{i,0},\ldots,a_{i,k})$ and $b_i = (b_{i,0},\ldots,b_{i,k-1})$.
  
  (1) $d_{i+1} \ne c_i+1$. If $d_{i+1}=c_i+1$, then $b_i = b_{i+1}$, which cannot happen because there is at most one direct edge between two sequences in $\Ga_{\MC_{*,*}(G)}^M$.
  
  (2) $d_{i+1} \le c_i+2$. If $d_{i+1} \ge c_i+3$, then $(b_{i+1,0},\ldots,b_{i+1,c_i+2}) = (a_{i+1,0},\ldots,a_{c_i+2})$.
  Because of the edge $b_i\to a_{i+1}$, we know $a_{i+1}$ has matching state delete($c_i+1$). By definition of prefix matchings, sequence $b_{i+1}$ also has matching state delete($c_i+1$). Then there cannot be an edge $b_{i+1} \to a_{i+2}$. Contradiction.
  
  (3) $d_i \le c_i+1$. If $d_i \ge c_i+2$, then $(b_{i,0},\ldots,b_{i,c_i+1}) = (a_{i,0},\ldots,a_{i,c_i+1})$.
  Because of the edge $b_i \to a_{i+1}$, we know $b_i$ has matching state insert($c_i$, $u_i$). By definition of prefix matchings, sequence $a_i$ also has matching state insert($c_i$, $u_i$). Then there cannot be an edge $b_{i-1}\to a_i$. Contradiction.
  \end{proof}
  
  Some of the matchings we construct are for proving that a graph is diagonal. Therefore we make the following definition.
  \begin{defn}[Diagonal matching rule]
  Let $F$ be a valid matching rule. We say $F$ is a diagonal matching rule if for any sequence $(x_0,\ldots,x_k)$ with unmatched prefix $(x_0,\ldots,x_{k-1})$ and $d(x_{k-1},x_k)\ge 2$, we have $F(x_0,\ldots,x_k)\ne \ep$.
  \end{defn}
  \begin{lemma}\label{LemmaDiagMatRuleProperty}
  Let $F$ be a diagonal matching rule and $M$ be the prefix matching generated by $F$.
  If $(x_0,\ldots,x_k)$ is unmatched, then $d(x_i,x_{i+1})=1$ for all $i$.
  If $F(x_0,\ldots,x_k) = \iota(v)$, then we have $d(x_{k-1},v) = 1$.
  \end{lemma}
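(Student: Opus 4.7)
The plan is to reduce both claims to the contrapositive of the diagonal matching rule condition, after first establishing a preliminary observation: in any prefix matching, being unmatched is hereditary under taking prefixes. That is, if $(x_0,\ldots,x_k)$ is unmatched, then every prefix $(x_0,\ldots,x_j)$ with $j\le k$ is also unmatched. I will verify this by contradiction. Suppose $(x_0,\ldots,x_j)$ had matching state insert($i,v$) for some $i\le j-1$; then by property (1) in the definition of prefix matching, every sequence having $(x_0,\ldots,x_{i+1})$ as a prefix inherits this same state, and in particular so does $(x_0,\ldots,x_k)$, contradicting that it is unmatched. The case of state delete($i$) is entirely analogous using property (2) of prefix matchings.

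With heredity in hand, the first claim follows directly. For each $i$ with $1\le i\le k$, both $(x_0,\ldots,x_i)$ and its prefix $(x_0,\ldots,x_{i-1})$ are unmatched. The ``if and only if'' clause in the definition of the prefix matching generated by $F$ then forces $F(x_0,\ldots,x_i)=\ep$. Applying the contrapositive of the defining condition of a diagonal matching rule, I conclude $d(x_{i-1},x_i)<2$; since $x_{i-1}\ne x_i$ by the definition of $I_{k,l}(G)$, this gives $d(x_{i-1},x_i)=1$.

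For the second claim, assume $F(x_0,\ldots,x_k)=\iota(v)$. Since $F$ takes a nontrivial value, the remark after Definition \ref{DefnMatRule} tells us the prefix $(x_0,\ldots,x_{k-1})$ is unmatched. Valid property (1) of Definition \ref{DefnMatRule} then gives $F(x_0,\ldots,x_{k-1},v)=\ep$; combining this with the ``if and only if'' clause applied to the unmatched prefix $(x_0,\ldots,x_{k-1})$, the sequence $(x_0,\ldots,x_{k-1},v)$ is itself unmatched. The first claim applied to $(x_0,\ldots,x_{k-1},v)$ then yields $d(x_{k-1},v)=1$.

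The only real subtlety, and hence the main obstacle I anticipate, is the heredity step: the definition of prefix matching only constrains where \emph{matched} states must appear, so deducing that unmatched-ness propagates downward to prefixes requires using the propagation axioms in contrapositive form. Once this is in place, the remainder is a direct unpacking of definitions.
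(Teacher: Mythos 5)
Your proposal is correct and follows essentially the same approach as the paper: the paper also reduces the second claim to the first via valid property (1), and proves the first claim by observing that being unmatched is inherited by prefixes and then invoking the diagonal matching rule condition in contrapositive. The only difference is that the paper treats the heredity observation as immediate ("Because it is unmatched, the prefix $(x_0,\ldots,x_j)$ is unmatched"), whereas you spell out the contrapositive argument via the prefix-matching axioms; that added detail is sound and does not change the underlying strategy.
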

  \begin{proof}
  The second assertion follows from the first by valid property (1) in Definition \ref{DefnMatRule}. So we only need to prove the first assertion.
  Suppose $(x_0,\ldots,x_k)$ is unmatched and $d(x_j,x_{j+1})\ge 2$ for some $j<k$.
  Because it is unmatched, the prefix $(x_0,\ldots,x_j)$ is unmatched.
  Then by definition of diagonal matchings, $F(x_0,\ldots,x_{j+1})\ne \ep$. Contradiction.
  \end{proof}
  \begin{coro}\label{CoroDiagMatRuleImplyGraph}
  Let $F$ be a diagonal matching rule and $M$ be the prefix matching generated by $F$.
  If $M$ is a Morse matching, then the graph $G$ is diagonal.
  \end{coro}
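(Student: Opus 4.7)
The plan is to combine Theorem \ref{ThmMorse} with the structural constraint on unmatched sequences coming from Lemma \ref{LemmaDiagMatRuleProperty}. Concretely, the Morse matching $M$ produces, for each $l$, a reduced chain complex $C^\circ_{*,l}$ with $C^\circ_{k,l}$ freely generated (over $\bZ$) by the sequences in $I_{k,l}(G)$ that are unmatched in $M$. By Theorem \ref{ThmMorse}, this reduced complex is homotopy equivalent to $\MC_{*,l}(G)$, and therefore computes $\MH_{*,l}(G)$.

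Now I apply Lemma \ref{LemmaDiagMatRuleProperty}: any unmatched sequence $(x_0,\ldots,x_k)$ satisfies $d(x_i,x_{i+1}) = 1$ for every $i$. Summing, $\ell(x_0,\ldots,x_k) = k$, so such a sequence can only contribute to $C^\circ_{k,l}$ when $l = k$. In other words, $C^\circ_{k,l} = 0$ whenever $k \ne l$, so a fortiori $\MH_{k,l}(G) = H_k(C^\circ_{*,l}) = 0$ for $k \ne l$, which is precisely the definition of $G$ being diagonal.

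The argument is essentially a one-liner once Lemma \ref{LemmaDiagMatRuleProperty} and Theorem \ref{ThmMorse} are in hand, and there is no genuine obstacle to overcome. The only point worth being careful about is invoking Theorem \ref{ThmMorse} on each fixed-$l$ subcomplex rather than on $\MC_{*,*}(G)$ as a whole, which is legitimate because the differential $\der$ preserves $l$ and a Morse matching of $\MC_{*,*}(G)$ is by convention a Morse matching of each $\MC_{*,l}(G)$.
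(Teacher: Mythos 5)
Your proof is correct and is essentially identical to the paper's: both invoke Lemma \ref{LemmaDiagMatRuleProperty} to conclude that every unmatched sequence has $\ell = k$, so $\MC^\circ_{k,l}(G)$ vanishes off the diagonal, and then apply Theorem \ref{ThmMorse}. The extra remark about working one $l$-grading at a time is a fair clarification but matches the convention the paper already set up in Section \ref{SecDes}.
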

  \begin{proof}
  By Lemma \ref{LemmaDiagMatRuleProperty}, all unmatched sequences have $\ell(x_0,\ldots,x_k)=k$.
  So $I_{k,l}^\circ$ is nonempty only when $k=l$ and therefore $\MC^\circ_{k,l}(G)$ is nontrivial only when $k=l$.
  By Theorem \ref{ThmMorse}, magnitude homology $\MH_{*,*}(G)$ is homology of $\MC^\circ_{*,*}(G)$ with certain differentials.
  So $\MH_{k,l}(G)$ is nontrivial only when $k=l$.
  \end{proof}
  \begin{lemma}\label{LemmaNonMorseDiag}
  Work in the setting of Lemma \ref{LemmaNonMorse} and assume in addition that $M$ is generated by a diagonal matching rule $F$.
  Then $d_i = c_i+1$ for all $i$.
  \end{lemma}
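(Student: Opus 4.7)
The plan is to combine the bound $d_i \le c_i + 1$ already established in Lemma \ref{LemmaNonMorse} with the diagonality hypothesis in order to rule out the remaining possibility $d_i \le c_i$. Since $d_i \ge 1$ always (the differential of $\MC_{*,*}(G)$ sums only over $1 \le i \le k-1$), it suffices to fix an index $i$, assume $1 \le d_i \le c_i$, and derive a contradiction; the desired equality then holds uniformly for every $i$.

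First I would unpack the meaning of $b_i$ having matching state insert($c_i, u_i$). By the definition of the prefix matching generated by $F$, this state arises from $F(b_{i,0},\ldots,b_{i,c_i+1}) = \iota(u_i)$, which in turn requires the prefix $(b_{i,0},\ldots,b_{i,c_i})$ to be unmatched (otherwise the matching state of $b_i$ would already be determined by an earlier, shorter prefix on which $F$ takes a non-$\ep$ value). Applying Lemma \ref{LemmaDiagMatRuleProperty} to this unmatched prefix gives $d(b_{i,j},b_{i,j+1}) = 1$ for all $0 \le j \le c_i - 1$.

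Next I would compare $a_i$ and $b_i$ entry by entry. Since $b_i$ is obtained from $a_i$ by deleting position $d_i$, we have $b_{i,j}=a_{i,j}$ for $j<d_i$ and $b_{i,j}=a_{i,j+1}$ for $j\ge d_i$. Under the assumption $d_i \le c_i$, both indices $d_i-1$ and $d_i$ lie in $[0,c_i]$, so the previous observation yields
\begin{equation*}
d(a_{i,d_i-1},\,a_{i,d_i+1}) \;=\; d(b_{i,d_i-1},\,b_{i,d_i}) \;=\; 1.
\end{equation*}
On the other hand, the existence of the unmatched boundary edge $a_i \to b_i$ in $\Ga_{\MC_{*,*}(G)}^M$ forces $\ell(b_i)=\ell(a_i)$, which after telescoping is equivalent to $d(a_{i,d_i-1},a_{i,d_i}) + d(a_{i,d_i},a_{i,d_i+1}) = d(a_{i,d_i-1},a_{i,d_i+1})$. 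The left side is at least $2$, because consecutive entries of any sequence in $I_{k,l}(G)$ are distinct and hence at distance at least $1$, contradicting the value $1$ on the right.

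The only delicate point is really the first step: justifying that the matching state insert($c_i, u_i$) of $b_i$ propagates backward to make the length-$(c_i{+}1)$ prefix unmatched. Once that is pinned down, Lemma \ref{LemmaDiagMatRuleProperty} applies uniformly across the prefix, and the contradiction between the resulting short-distance constraint and the geodesic condition imposed by the magnitude differential is immediate, so combining with Lemma \ref{LemmaNonMorse}(3) gives $d_i = c_i + 1$ as claimed.
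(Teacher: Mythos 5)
Your proof is correct and is essentially the same argument as the paper's: both combine the upper bound $d_i \le c_i+1$ from Lemma \ref{LemmaNonMorse} with the observation that the boundary edge $a_i\to b_i$ forces $d(b_{i,d_i-1},b_{i,d_i})=d(a_{i,d_i-1},a_{i,d_i+1})\ge 2$, while Lemma \ref{LemmaDiagMatRuleProperty} applied to the unmatched prefix $(b_{i,0},\ldots,b_{i,c_i})$ forbids any such distance-$\ge 2$ jump before position $c_i$. The paper packages this as a squeeze via the auxiliary quantity $\rho(b_i)$ whereas you argue directly by contradiction, but the mathematical content is identical.
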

  \begin{proof}
  For a sequence $x_0,\ldots,x_k$, let $\rho(x_0,\ldots,x_k)$ denote the largest integer $j$ such that $\ell(x_0,\ldots,x_j) = j$.

  By Lemma \ref{LemmaNonMorse}, we have $d_i \le c_i+1$.
  Because we can delete $a_{i,d_i}$ from $a_i$, we have $d(a_{i,d_i-1}, a_{i,d_i+1})\ge 2$.
  So $\rho(b_i) \le d_i-1$.
  By Lemma \ref{LemmaDiagMatRuleProperty}, the prefix $(b_{i,0},\ldots,b_{i,\rho(b_i)+1})$ is matched.
  By definition of prefix matchings, sequence $b_i$ has the same matching state as its prefix $(b_{i,0},\ldots,b_{i,\rho(b_i)+1})$.
  So we must have $c_i \le \rho(b_i)$.
  Combining these, we get $c_i = \rho(b_i) = d_i-1$.
  \end{proof}
  \begin{lemma}\label{LemmaNonMorseDC}
  Work in the setting of Lemma \ref{LemmaNonMorse} and assume in addition that $d_i = c_i+1$ and $d(a_{i+1,c_i},a_{i+1,c_i+1})=1$ for all $i$. Then $d(a_{i,d_i-1}, a_{i,d_i}) = 1$ for all $i$.
  \end{lemma}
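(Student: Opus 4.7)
The plan is to prove $d(a_{i, c_i}, a_{i, c_i+1}) = 1$, which equals the desired conclusion since $d_i = c_i + 1$, by strong induction on $c_i$. The guiding observation is that consecutive sequences $a_j, a_{j+1}$ in the cycle differ only at position $c_j + 1$, where $a_{j+1, c_j+1} = u_j$ while all other coordinates agree.

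For each integer $c$, I track the quantity $D_j := d(a_{j, c}, a_{j, c+1})$ around the cycle. Its transitions split into three cases. If $c_j = c$, then position $c+1$ changes to $u_j$ while position $c$ is unchanged, so the hypothesis yields $D_{j+1} = d(a_{j, c}, u_j) = 1$. If $c_j = c - 1$, then position $c$ changes to $u_j$ while position $c+1$ is unchanged; here I would invoke the inductive hypothesis at $j$ to get $d(a_{j, c-1}, a_{j, c}) = 1$, combine it with the fact that $a_j \to b_j$ is an edge in $\Ga_{\MC_{*,*}(G)}$ (which forces $d(a_{j, c-1}, a_{j, c+1}) = 1 + D_j$), the validity of the insert state at $b_j$ (namely $d(a_{j, c-1}, u_j) + d(u_j, a_{j, c+1}) = d(a_{j, c-1}, a_{j, c+1})$), and the hypothesis $d(a_{j, c-1}, u_j) = 1$, in order to deduce $d(u_j, a_{j, c+1}) = D_j$ and hence $D_{j+1} = D_j$. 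In every remaining case $D_{j+1} = D_j$ trivially.

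This completes the inductive step: fixing $c$ with $c_i = c$ for some $i$, the first case resets $D$ to $1$ at step $i$, the other cases preserve it, and going once around the cycle gives $D_j = 1$ for every $j$, in particular $D_i = 1$. The base case $c = \min_i c_i$ is immediate, since then no $c_j$ equals $c - 1$ and the second case is vacuous. The main obstacle, and the technical heart of the argument, is the derivation of $d(u_j, a_{j, c+1}) = D_j$ in the case $c_j = c - 1$, where the regular-edge structure at $a_j$, the validity of the matching rule at $b_j$, and both the hypothesis and inductive hypothesis at $j$ must be combined correctly; once this identity is secured, the walk around the cycle is routine.
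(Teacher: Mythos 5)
Your proof is correct, and it takes a genuinely different route from the paper's. The paper uses a single global potential function: for a sequence it defines $\tau(x_0,\ldots,x_k) = (\ell(x_0,x_1),\ell(x_0,x_1,x_2),\ldots,\ell(x_0,\ldots,x_k))$ and observes that since $a_i$ and $a_{i+1}$ differ only at position $d_i$, the hypothesis forces $\tau(a_{i+1}) \le \tau(a_i)$ lexicographically, with equality precisely when $d(a_{i,d_i-1},a_{i,d_i})=1$; going once around the cycle then gives $\tau(a_1)\ge\cdots\ge\tau(a_{p+1})=\tau(a_1)$, so every inequality is an equality, all in one stroke and with no induction. Your argument instead proceeds by induction on the value $c$, tracking the local quantity $D_j = d(a_{j,c},a_{j,c+1})$ around the cycle and showing it is reset to $1$ when $c_j=c$, propagated unchanged when $c_j=c-1$ (via the inductive hypothesis, both triangle-equalities coming from the edges $a_j\to b_j$ and $a_{j+1}\to b_j$, and the hypothesis $d(a_{j,c-1},u_j)=1$), and trivially unchanged otherwise. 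This is a valid proof; it is more elementary and explicit about exactly which edge conditions are being used, at the cost of an outer induction and an inner cycle traversal where the paper gets away with a single monotonicity observation. Both correctly isolate the crucial point that the transition $a_j \rightsquigarrow a_{j+1}$ can only relax the partial lengths, never increase them, and that the cycle structure then forces rigidity.
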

  \begin{proof}
  For a sequence $(x_0,\ldots,x_k)$, define $\tau(x_0,\ldots,x_k)$ be the sequence $$(\ell(x_0,x_1), \ell(x_0,x_1,x_2),\ldots,\ell(x_0,\ldots,x_k)).$$
  Because $d_i = c_i+1$ and $d(a_{i+1,c_i},a_{i+1,c_i+1})=1$, we have $\tau(a_{i+1})\le \tau(a_i)$ (where $\le$ is lexicographical order), and equality holds if and only if $d(a_{i,d_i-1}, a_{i,d_i}) = 1$.
  Because $\tau(a_1) \ge \cdots \ge \tau(a_p) \ge \tau(a_{p+1})=\tau(a_1)$, we see all inequality signs are equalities, and therefore $d(a_{i,d_i-1},a_{i,d_i})=1$ for all $i$.
  \end{proof}
  \begin{coro}\label{CoroNonMorseDiagDA}
  Work in the setting of Lemma \ref{LemmaNonMorse} and assume in addition that $M$ is generated by a diagonal matching rule $F$. Then $d(a_{i,d_i-1}, a_{i,d_i}) = 1$ for all $i$.
  \end{coro}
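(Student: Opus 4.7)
The plan is to combine the preceding three lemmas: Lemma \ref{LemmaNonMorseDiag} already supplies $d_i = c_i + 1$ under the diagonal hypothesis, and Lemma \ref{LemmaNonMorseDC} delivers the conclusion $d(a_{i,d_i-1}, a_{i,d_i}) = 1$ provided one can additionally verify the adjacency $d(a_{i+1,c_i}, a_{i+1,c_i+1}) = 1$ for every $i$. So the only real work is establishing this last adjacency, after which the corollary is immediate.

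First I would invoke Lemma \ref{LemmaNonMorseDiag} to record $d_i = c_i+1$ for every $i$. Next, I would unpack what the matching state insert($c_i, u_i$) of $b_i$ means at the level of the matching rule $F$. Since $M$ is a prefix matching generated by $F$, the prefix $(b_{i,0}, \ldots, b_{i,c_i+1})$ of $b_i$ is the first prefix on which $F$ is nontrivial, and on this prefix $F$ takes the value $\iota(u_i)$. By Lemma \ref{LemmaDiagMatRuleProperty}, since $F$ is a diagonal matching rule, this forces $d(b_{i,c_i}, u_i) = 1$.

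Then I would translate this adjacency back to $a_{i+1}$. By the definition of insert($c_i, u_i$), the sequence $a_{i+1}$ is obtained from $b_i$ by splicing in $u_i$ between positions $c_i$ and $c_i+1$, so $a_{i+1,c_i} = b_{i,c_i}$ and $a_{i+1,c_i+1} = u_i$. Hence $d(a_{i+1,c_i}, a_{i+1,c_i+1}) = d(b_{i,c_i}, u_i) = 1$. Having verified both hypotheses of Lemma \ref{LemmaNonMorseDC} (the relation $d_i = c_i+1$ and the adjacency just established), I would simply quote that lemma to conclude $d(a_{i,d_i-1}, a_{i,d_i}) = 1$ for all $i$.

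The main step requiring care is the middle one: correctly extracting, from the abstract matching state insert($c_i, u_i$) of $b_i$, the concrete statement that $F$ assigns $\iota(u_i)$ to the prefix of length $c_i+2$ of $b_i$, so that Lemma \ref{LemmaDiagMatRuleProperty} can be applied to obtain adjacency of $b_{i,c_i}$ and $u_i$. The rest of the argument is bookkeeping.
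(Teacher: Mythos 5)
Your proposal is correct and takes essentially the same route as the paper, whose entire proof is the one-line citation "By Lemma \ref{LemmaDiagMatRuleProperty}, Lemma \ref{LemmaNonMorseDiag}, and Lemma \ref{LemmaNonMorseDC}." You have simply unpacked the middle step that the paper leaves implicit, namely that the matching state insert($c_i,u_i$) of $b_i$ forces $F(b_{i,0},\ldots,b_{i,c_i+1})=\iota(u_i)$, so Lemma \ref{LemmaDiagMatRuleProperty} yields the adjacency $d(a_{i+1,c_i},a_{i+1,c_i+1})=1$ needed to feed Lemma \ref{LemmaNonMorseDC}.
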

  \begin{proof}
  By Lemma \ref{LemmaDiagMatRuleProperty}, Lemma \ref{LemmaNonMorseDiag}, and Lemma \ref{LemmaNonMorseDC}.
  \end{proof}
  \section{Computations}\label{SecComp}
  In this section we perform the computations. Each computation is in roughly four steps.
  \begin{enumerate}
  \item Setup notions and describe the matching rule $F$.
  \item Prove that $F$ is valid, and sometimes that $F$ is diagonal.
  \item Prove that $F$ generates a Morse matching.
  \item Analyze unmatched sequences, and analyze differentials in $\MC_{*,*}^\circ(G)$ if there are any.
  \end{enumerate}
  Usually step (2) and step (4) are routine, and step (3) is the most difficult.
  Sometimes step (3) involves analyzing unmatched sequences.
  \subsection{Warmup: Trees}
  Hepworth and Willerton proved that trees are diagonal (\cite{HW17}, Corollary 31). Their proof relies on a Mayer-Vietoris theorem, which can take some effort to prove. As a warmup for more complicated computations, we give another proof of this fact using algebraic Morse theory.
  \begin{prop}\label{PropTree}
  Trees are diagonal.
  \end{prop}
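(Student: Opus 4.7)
The plan is to define a diagonal matching rule $F$ on the magnitude chain complex $\MC_{*,*}(T)$, verify that $F$ is valid and that the induced prefix matching $M$ is Morse, and then apply Corollary \ref{CoroDiagMatRuleImplyGraph} to conclude. The key feature of trees that makes both the validity check and the Morse-ness argument go through is the uniqueness of geodesics.

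The rule $F$ is defined recursively. On a sequence $(x_0,\ldots,x_k)$ having some strictly shorter prefix with $F$-value different from $\ep$, set $F(x_0,\ldots,x_k)=\ep$. Otherwise, the rule activates locally: if $k\ge 2$ and $x_{k-1}$ is the first vertex on the (unique) geodesic from $x_{k-2}$ to $x_k$, i.e., $d(x_{k-2},x_{k-1})=1$ and $x_{k-1}$ lies on that geodesic, then $F(x_0,\ldots,x_k)=\de$; else if $d(x_{k-1},x_k)\ge 2$, let $v$ be the first vertex on the geodesic from $x_{k-1}$ to $x_k$ and set $F(x_0,\ldots,x_k)=\iota(v)$; otherwise $F(x_0,\ldots,x_k)=\ep$. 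That $F$ is a diagonal matching rule is immediate: whenever the prefix $(x_0,\ldots,x_{k-1})$ is unmatched and $d(x_{k-1},x_k)\ge 2$, the local rule fires with value in $\{\de,\iota(\cdot)\}$.

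Verifying validity is the most delicate step. For condition (1), the nontrivial check is that $F(x_0,\ldots,x_{k-1},v)=\ep$ whenever $F(x_0,\ldots,x_k)=\iota(v)$. The only way the local $\de$-condition could trigger on $(x_0,\ldots,x_{k-1},v)$ is if $d(x_{k-2},x_{k-1})=1$ and $x_{k-2}\ne v$, in which case $x_{k-2}$ and $v$ would be two distinct neighbors of $x_{k-1}$, so removing the edge $x_{k-2}x_{k-1}$ would separate $x_{k-2}$ from $v$ and hence from $x_k$ (since $v$ lies on the geodesic from $x_{k-1}$ to $x_k$); this forces $x_{k-1}$ to lie on the geodesic from $x_{k-2}$ to $x_k$ as well and would have triggered $\de$ already at $(x_0,\ldots,x_k)$, contradicting the $\iota$ assignment. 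Condition (2) is handled analogously: the absence of triangles in trees again rules out the spurious $\de$-triggers, collapsing any ambiguity to the forced case $x_{k-3}=x_{k-1}$, which is easily checked to give $F(x_0,\ldots,x_{k-2},x_k)=\iota(x_{k-1})$ as required.

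Morse-ness follows by contradiction. Suppose $\Ga_{\MC_{*,*}(T)}^M$ contains a directed cycle $a_1\to b_1\to\cdots\to a_p\to b_p\to a_1$ in the notation of Lemma \ref{LemmaNonMorse}. Lemma \ref{LemmaNonMorseDiag} and Corollary \ref{CoroNonMorseDiagDA} give $d_i=c_i+1$ and $d(a_{i,d_i-1},a_{i,d_i})=1$; moreover, since the deletion $a_i\to b_i$ preserves $\ell$, the vertex $a_{i,d_i}$ lies on the geodesic from $a_{i,d_i-1}$ to $a_{i,d_i+1}$. In a tree this geodesic is unique, so $a_{i,d_i}$ is the first vertex on it, and the $\iota$-rule at the matching prefix of $b_i$ forces $u_i=a_{i,d_i}$. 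Consequently $a_{i+1}=a_i$, whence $b_i=b_{i-1}$ (both are the unique matching partner of $a_i$), and the entire cycle collapses to a single matched pair, which cannot support a directed cycle in $\Ga^M$ since no length-$2$ directed cycle exists. The hard part is the validity case analysis; the Morse-ness argument itself is short once the lemmas from Section \ref{SecDes} are in hand.
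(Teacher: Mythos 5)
Your proof is correct and follows essentially the same approach as the paper's: you use the same matching rule (insert/delete the first vertex of the unique geodesic, with the same guard on the delete rule), verify validity and diagonality, and then derive Morse-ness from Lemma \ref{LemmaNonMorseDiag} and Corollary \ref{CoroNonMorseDiagDA} by showing $a_{i+1}=a_i$ and getting a contradiction with the matching structure. The only cosmetic difference is that your validity checks are phrased via edge-removal separation arguments rather than the paper's direct manipulation of $\sigma$, but the underlying tree fact (concatenation of geodesics through a degree-one step) is the same, and the Morse-ness contradiction is identical in substance.
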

  \begin{proof}
  Let $G$ be a tree.
  We define a function $\sm: \{(u,v) \in V(G) \t V(G) : d(u,v)\ge 2\} \to V(G)$ that maps $(u,v)$ to the unique vertex $w$ with $d(u,w)=1$ and $d(u,v) = d(u,w)+d(v,w)$.
  Existence and uniqueness follows from that $G$ is a tree.
  
  Let us describe the matching rule.
  Fix a sequence $(x_0,\ldots,x_k)$ with unmatched prefix $(x_0,\ldots,x_{k-1})$.
  \begin{enumerate}
  \item If $k\ge 2$ and $x_{k-1} = \sm(x_{k-2},x_k)$, then $F(x_0,\ldots,x_k) = \de$.
  \item If $k\ge 1$, $d(x_{k-1}, x_k)\ge 2$, and not $(k\ge 2 \land x_{k-1} = \sm(x_{k-2},x_k))$, then $F(x_0,\ldots,x_k) = \iota(\sm(x_{k-1},x_k))$.
  \end{enumerate}
  
  Let us prove that $F$ is a valid matching rule. 
  Fix a sequence $(x_0,\ldots,x_k)$ with unmatched prefix $(x_0,\ldots,x_{k-1})$.
  \begin{enumerate}
  \item Suppose $k\ge 2$ and $x_{k-1} = \sm(x_{k-2},x_k)$. Clearly $d(x_{k-2},x_k)\ge 2$. Also, if $k\ge 3$ and $x_{k-2} = \sm(x_{k-3},x_k)$, then $x_{k-2} = \sm(x_{k-3}, x_{k-1})$ and $F(x_0,\ldots,x_{k-1}) = \de$, which is not true. So we have $\lnot (k\ge 3 \land x_{k-2} = \sm(x_{k-3},x_k))$.
  So $F(x_0,\ldots,x_{k-2},x_k) = \iota(\sm(x_{k-2},x_k)) = \iota(x_{k-1})$.
  \item Suppose $k\ge 1$, $d(x_{k-1}, x_k)\ge 2$, and not $(k\ge 2 \land x_{k-1} = \sm(x_{k-2},x_k))$.
  Because $d(x_{k-1},\sm(x_{k-1},x_k))=1$, we have $F(x_0,\ldots,x_{k-1},\sm(x_{k-1},x_k))\ne \iota(*)$.
  If $F(x_0,\ldots,x_{k-1},\sm(x_{k-1},x_k)) = \de$, then $k\ge 2$ and $x_{k-1} = \sm(x_{k-2}, \sm(x_{k-1},x_k))$, and therefore $x_{k-1} = \sm(x_{k-2}, x_k)$, which is not true.
  So $F(x_0,\ldots,x_{k-1},\sm(x_{k-1},x_k)) = \ep$ and thus $F(x_0,\ldots,x_{k-1},\sm(x_{k-1},x_k), x_k) = \de$.
  \end{enumerate}
  
  It is easy to see that $F$ is a diagonal matching rule.
  
  Let $M$ be the prefix matching generated by $F$. Let us prove that $M$ is a Morse matching. Work in the setting of Lemma \ref{LemmaNonMorse}. Suppose in $\Ga_{\MC_{*,*}(G)}^M$ there is a directed cycle $a_1 \to b_1 \to \cdots \to b_p \to a_{p+1}=a_1$. 
  Define $d_i$, $c_i$, $u_i$ accordingly.
  
  By Corollary \ref{CoroNonMorseDiagDA}, we have $d(a_{1,d_1-1}, a_{1,d_1}) = 1$.
  Because of the edge $a_1 \to b_1$, we have $d(a_{1,d_1-1}, a_{1,d_1}) + d(a_{1,d_1}, a_{1,d_1+1}) = d(a_{1,d_1-1}, a_{1,d_1+1})$.
  So $a_{1,d_1} = \sm(a_{1,d_1-1}, a_{1,d_1+1})$.
  Also, by the edge $b_1 \to a_2$, we have $a_{2,c_1+1} = \sm(a_{2,c_1}, a_{2,c_1+2})$.
  By Lemma \ref{LemmaNonMorseDiag}, we have $c_1 = d_1-1$.
  Also, we have $a_{1,j} = b_{1,j} = a_{2,j}$ for $j\le d_1-1$, and $a_{1,j} = b_{1,j-1} = a_{2,j}$ for $j\ge d_1+1$.
  So $a_{2,d_1} = a_{1,d_1}$, and therefore $a_2 = a_1$.
  Then edges $a_1 \to b_1$ and $b_1\to a_2$ cannot both exist in $\Ga_{\MC_{*,*}(G)}^M$. Contradiction.
  So $M$ is a Morse matching.
  
  By Corollary \ref{CoroDiagMatRuleImplyGraph}, the graph $G$ is diagonal. Actually, the unmatched sequences are $(v)$ for $v\in V(G)$ and $(u,v,u,\cdots, u \text{ or } v)$ for $(u,v)$ or $(v,u) \in E(G)$.
  Their homology classes form a basis of $\MH_{*,*}(G)$.
  \end{proof}
  
  In fact, the computation for trees can be directly applied to graphs that are both ptolemaic and geodetic. However, it turns out that this does not give previously-unknown diagonal graphs. See Appendix \ref{SecGeoPto}.
  
  \subsection{A new class of diagonal graphs}
  Hepworth and Willerton proved that joins of graphs are diagonal (\cite{HW17}, Theorem 37). We give a new class of diagonal graphs, containing all joins.
  \begin{defn}[Pawful graphs]\label{DefnPawful}
  A pawful graph is a connected graph of diameter at most $2$ satisfying the property that for any three vertices $u,v,w$ with $d(u,v)=d(v,w)=2$ and $d(u,w)=1$, there exists a vertex $x$ such that $d(x,u)=d(x,v)=d(x,w)=1$.\footnote{They are so named because the subgraph induced by vertices $u,v,w,x$ is a paw.}
  \end{defn}
  \begin{eg}
  Joins of graphs are pawful. Suppose we have a join $G\star H$ with $G$ and $H$ non-empty. Then $G\star H$ is connected and has diameter at most $2$.
  If there are three vertices $u,v,w$ with $d(u,v)=d(v,w)=2$ and $d(u,w)=1$, then $u,v,w$ must be all in $G$ or all in $H$, and we can take $x$ to be an arbitrary vertex of the other side.
  
  There exist pawful graphs that are not joins, e.g.~the complement of $C_n$ for $n\ge 6$. So the class of pawful graphs is strictly larger than the class of joins.
  \end{eg}
  \begin{thm}\label{ThmPawful}
  Pawful graphs are diagonal.
  \end{thm}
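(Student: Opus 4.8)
The plan is to exhibit a \emph{diagonal} matching rule $F$ for $\MC_{*,*}(G)$, show it generates a Morse matching, and conclude by Corollary \ref{CoroDiagMatRuleImplyGraph}. Fix a total order on $V(G)$. Since $G$ has diameter $\le 2$, every consecutive distance in a sequence is $1$ or $2$, so a sequence is a walk (all consecutive distances $1$) exactly when it has no distance-$2$ step. The rule I would use fires at the first distance-$2$ step: if $(x_0,\ldots,x_{k-1})$ is a walk and $d(x_{k-1},x_k)=2$, set $F(x_0,\ldots,x_k)=\iota(v)$, where $v$ is the smallest common neighbour of $x_{k-1}$ and $x_k$ with $d(x_{k-2},v)\le 1$ (for $k=1$, just the smallest common neighbour of $x_0,x_1$); dually $F(x_0,\ldots,x_k)=\de$ exactly when $(x_0,\ldots,x_{k-1})$ is a walk, $d(x_{k-2},x_k)=2$, $d(x_{k-1},x_k)=1$, and $x_{k-1}$ is the vertex the $\iota$-rule would have chosen for $(x_0,\ldots,x_{k-2},x_k)$ (with $x_{k-3}$ as context when $k\ge 3$). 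Informally: scan left to right, split the first distance-$2$ step by inserting a carefully chosen intermediate vertex, and recognise and delete that vertex later.

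The crucial point — and the only place the paw condition is used — is that such a $v$ exists whenever $(x_0,\ldots,x_{k-1})$ is a walk with $d(x_{k-1},x_k)=2$. Indeed $d(x_{k-2},x_{k-1})=1$; if $d(x_{k-2},x_k)=1$ then $x_{k-2}$ itself is a common neighbour of $x_{k-1},x_k$ at distance $0$ from $x_{k-2}$; and if $d(x_{k-2},x_k)=2$ then applying Definition \ref{DefnPawful} with $(u,v,w)=(x_{k-1},x_k,x_{k-2})$ (so $d(u,v)=d(v,w)=2$, $d(u,w)=1$) yields a vertex adjacent to $x_{k-1},x_k,x_{k-2}$, i.e.\ a common neighbour of $x_{k-1},x_k$ at distance $1$ from $x_{k-2}$. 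Granting this, checking that $F$ is a valid matching rule (Definition \ref{DefnMatRule}) is routine: the inserted vertex lies on a geodesic $x_{k-1}\to x_k$; the constraint $d(x_{k-2},v)\le 1$ forces $(x_0,\ldots,x_{k-1},v)$ to remain unmatched (a deletion at $x_{k-1}$ would need $d(x_{k-2},v)=2$); and the sequence produced by an insertion satisfies the $\de$-predicate by construction. Diagonality of $F$ is immediate: a distance-$2$ final step of a walk-prefix always triggers an insertion, so every unmatched sequence is a walk and hence lies on the diagonal.

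It remains to show the prefix matching $M$ generated by $F$ is Morse, and this is the step I expect to be the main obstacle. Suppose $\Ga_{\MC_{*,*}(G)}^M$ contains a zig-zag cycle $a_1\to b_1\to\cdots\to b_p\to a_{p+1}=a_1$ (Remark \ref{RmkMorseCycle}), with $a_i$ mapping to $b_i$ by deleting position $d_i$ and $b_i$ to $a_{i+1}$ by inserting $u_i$ at position $c_i$. By Lemma \ref{LemmaNonMorseDiag} and Corollary \ref{CoroNonMorseDiagDA} (valid since $F$ is diagonal) one gets $c_i=d_i-1$ and $d(a_{i,d_i-1},a_{i,d_i})=1$, so $a_{i+1}$ is $a_i$ with the single entry $a_{i,d_i}$ replaced by $u_i$, and $u_i$ is the canonical neighbour selected by the insertion rule for $(a_{i,d_i-2},a_{i,d_i-1},a_{i,d_i+1})$; in particular $d(a_{i,d_i-2},u_i)\le 1$. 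Combined with Lemma \ref{LemmaNonMorse}, this shows $d_{i+1}\notin\{d_i,d_i-1\}$ — the deletion witnessing $a_{i+1}\to b_{i+1}$ cannot sit at position $d_i-1$, since there $d(a_{i+1,d_i-2},a_{i+1,d_i})=d(a_{i,d_i-2},u_i)\le 1$ contradicts length-preservation of that deletion — so $d_{i+1}=d_i+1$ or $d_{i+1}\le d_i-2$. Hence the length $d_{i-1}$ of the unmatched walk-prefix of $a_i$ increases by $1$ at "carry" steps and drops by at least $2$ otherwise, which obstructs periodicity; completing the argument means pushing this bookkeeping — tracking how each canonicalisation destroys the split pattern one slot to its left while creating a new one at the frontier, and invoking the $\rho$/$\tau$-type monovariants of Section \ref{SecDes} — to a contradiction. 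Then $M$ is Morse, $G$ is diagonal by Corollary \ref{CoroDiagMatRuleImplyGraph}, and the unmatched sequences (the walks of $G$ with no canonical split pattern) descend to a basis of $\MH_{*,*}(G)$.

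The genuinely delicate part is thus the Morse-ness verification, together with getting the rule right in the first place: the naive choice "insert the smallest common neighbour" already fails validity — on $C_4$ the inserted vertex $v$ can have $d(x_{k-2},v)=2$ with $x_{k-1}$ the smallest common neighbour of $x_{k-2}$ and $v$, so $(x_0,\ldots,x_{k-1},v)$ is itself matched — and the repair, forcing $v$ to be adjacent to (or equal to) $x_{k-2}$, is precisely what the paw condition makes possible; the lemmas of Section \ref{SecDes} then cut the acyclicity check down to a finite combinatorial problem, but analysing how the canonical choices propagate around a putative cycle is where the real effort lies.
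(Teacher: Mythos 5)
Your matching rule is essentially the one in the paper: at the first distance-$2$ step insert a common neighbour of $x_{k-1},x_k$ lying at distance $\le 1$ from $x_{k-2}$ (this is exactly where pawfulness enters, applied to the triple $(x_{k-1},x_k,x_{k-2})$, just as in the paper's rules (2)--(3)), and delete the recognizable inserted vertex; your validity and diagonality checks are the routine ones. The genuine gap is the Morse-ness verification, which you explicitly leave unfinished, and it is the heart of the theorem. The facts you do establish --- $c_i=d_i-1$, $d(a_{i,d_i-1},a_{i,d_i})=1$, and $d_{i+1}\ne d_i-1$, hence $d_{i+1}=d_i+1$ or $d_{i+1}\le d_i-2$ --- do not by themselves ``obstruct periodicity'': a pattern of increments such as $+1,+1,-2$ repeated around the cycle sums to zero and is compatible with every constraint you list, and the paper's own example of a valid prefix matching that is not Morse shows that constraints of this kind cannot suffice without further structural input. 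So the concluding step of your third paragraph is not a proof.

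The paper closes exactly this gap with a short argument you could adapt verbatim: rotate the cycle so that $c_1$ is minimal among all $c_i$. Lemma \ref{LemmaNonMorse} together with Lemma \ref{LemmaNonMorseDiag} forces $d_2=c_1+2$ and $c_2=c_1+1$, and the key property of the insertion rule --- the inserted vertex is at distance $\le 1$ from the entry two slots to its left, which is precisely what the paw condition buys --- gives $d(a_{3,c_1},a_{3,c_1+2})\le 1$. One then shows by induction that this inequality persists for all $i\ge 3$: it is untouched when $d_i\ge c_1+3$, and it is re-created by the new insertion when $d_i=c_1+2$. Since a deletion at position $c_1+1$ requires $d(a_{i,c_1},a_{i,c_1+2})\ge 2$, no $i\ge 3$ can have $d_i=c_1+1$, i.e.\ $c_i\ne c_1$ for all $i\ge 3$, contradicting $c_{p+1}=c_1$. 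With this monovariant (tracking the fixed slot $c_1$ rather than the moving frontier $d_i$) supplied, your construction does yield a complete proof along the same lines as the paper's.
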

  \begin{proof}
  Let $G$ be a pawful graph.
  We choose a function $f : \{(u,v)\in V(G)\t V(G) : d(u,v)=2\} \to V(G)$ that maps $(u,v)$ to any vertex $w$ with $d(u,w)=d(v,w)=1$, and a function $g: \{(u,v,w)\in V(G)^{\t 3} : d(u,v)=d(v,w)=2, d(u,w)=1\} \to V(G)$ that maps $(u,v,w)$ to any vertex $x$ with $d(u,x)=d(v,x)=d(w,x)=1$.
  Because $G$ is pawful, such functions $f$ and $g$ exist.

Let us describe the matching rule.
  Fix a sequence $(x_0,\ldots,x_k)$ with unmatched prefix $(x_0,\ldots,x_{k-1})$.
  \begin{enumerate}
  \item If $k=1$ and $d(x_0,x_1)=2$, then $F(x_0,x_1) = \iota(f(x_0,x_1))$.
  \item If $k\ge 2$, $d(x_{k-1},x_k)=2$, and $d(x_{k-2},x_k)=1$, then $F(x_0,\ldots,x_k)=\iota(x_{k-2})$.
  \item If $k\ge 2$, $d(x_{k-1},x_k)=2$, and $d(x_{k-2},x_k)=2$, then
  $F(x_0,\ldots,x_k) = \iota(g(x_{k-2},x_k,x_{k-1}))$.
  \item If $k=2$, $d(x_0,x_2)=2$ and $x_1 = f(x_0,x_2)$, then $F(x_0,x_1,x_2)=\de$.
  \item If $k\ge 3$, $d(x_{k-2},x_k)=2$, $d(x_{k-3}, x_k)=1$, and $x_{k-3}=x_{k-1}$, then $F(x_0,\ldots,x_k) = \de$.
  \item If $k\ge 3$, $d(x_{k-2},x_k)=2$, $d(x_{k-3}, x_k)=2$, and $x_{k-1} = g(x_{k-3}, x_k, x_{k-2})$, then $F(x_0,\ldots,x_k) = \de$.
  \end{enumerate}
  
  Let us prove that $F$ is a valid matching rule. Note that $F$ is a diagonal matching rule.\footnote{Strictly speaking, we perform induction on $\ell$ and prove at the same time that $F$ is valid and diagonal up to sequences with $\ell(x_0,\ldots,x_k)\le \ell$. For simplicity, we use the fact that $F$ is diagonal during the proof that $F$ is valid. Because we are doing induction, this is not circular argument.}
  \begin{enumerate}
  \item Suppose $k=1$ and $d(x_0,x_1)=2$. Then $d(x_0,f(x_0,x_1))=1$, so $F(x_0,f(x_0,x_1)) = \ep$. Then we have $F(x_0,f(x_0,x_1),x_1) = \de$ by rule (4).
  \item Suppose $k\ge 2$, $d(x_{k-1},x_k)=2$, and $d(x_{k-2},x_k)=1$.
  We have $d(x_{k-2},x_{k-2})=0$, so $F(x_0,\ldots,x_{k-1},x_{k-2}) \ne \de$.
  Because $(x_0,\ldots,x_{k-1})$ is unmatched, we have $d(x_{k-2},x_{k-1})=1$.
  So $F(x_0,\ldots,x_{k-1},x_{k-2}) \ne \iota(*)$ and therefore $F(x_0,\ldots,x_{k-1},x_{k-2})=\ep$.
  Then we have $F(x_0,\ldots,x_{k-1},x_{k-2},x_k)=\de$ by rule (5).
  \item Suppose $k\ge 2$, $d(x_{k-1},x_k)=2$, and $d(x_{k-2},x_k)=2$.
  We have $d(x_{k-2}, g(x_{k-2},x_k,x_{k-1})) = d(x_{k-1}, g(x_{k-2},x_k,x_{k-1})) = 1$, so $F(x_0,\ldots,x_{k-1}, g(x_{k-2},x_k,x_{k-1})) = \ep$.
  Then we have $F(x_0,\ldots,x_{k-1},g(x_{k-2},x_k,x_{k-1}),x_k)=\ep$ by rule (6).
  \item Suppose $k=2$, $d(x_0,x_2)=2$ and $x_1 = f(x_0,x_2)$. Then $F(x_0,x_2)=\iota(x_1)$ by rule (1).
  \item Suppose $k\ge 3$, $d(x_{k-2},x_k)=2$, $d(x_{k-3}, x_k)=1$, and $x_{k-3}=x_{k-1}$. Then $F(x_0,\ldots,x_{k-2},x_k) = \iota(x_{k-1})$ by rule (2). 
  \item Suppose $k\ge 3$, $d(x_{k-2},x_k)=2$, $d(x_{k-3}, x_k)=2$, and $x_{k-1} = g(x_{k-3}, x_k, x_{k-2})$. Then $F(x_0,\ldots,x_{k-2}, x_k) = \iota(x_{k-1})$ by rule (3).
  \end{enumerate}
  
  Let $M$ be the prefix matching generated by $F$. Let us prove that $M$ is a Morse matching.
  Work in the setting of Lemma \ref{LemmaNonMorse}.
  Suppose in $\Ga_{\MC_{*,*}(G)}^M$ there is a directed cycle $a_1 \to b_1 \to \cdots \to b_p \to a_{p+1} = a_1$, Define $d_i$, $c_i$, $u_i$ accordingly.
  By rotating, we can assume WLOG that $c_1$ is the smallest $c_i$ among all $i$'s.
  
  By Lemma \ref{LemmaNonMorse}, we have $d_2 \le c_1+2$ and $d_2 \ne c_1+1$.
  So $d_2 = c_1+2$ and therefore $c_2 = c_1+1$ by Lemma \ref{LemmaNonMorseDiag}.
  By definition of $F$, we have $d(a_{3,c_1}, a_{3,c_1+2}) \le 1$.
  Actually, for all $i\ge 3$, if $d_i = c_1+2$, then $d(a_{i+1,c_1}, a_{i+1,c_1+2}) \le 1$ by definition of $F$; if $d_i \ne c_1+2$, then $d(a_{i+1,c_1}, a_{i+1,c_1+2}) = d(a_{i,c_1}, _{i,c_1+2}) \le 1$. (We know that $a_{i,c_1} = a_{i+1,c_1}$ because $d_i = c_i+1 \ge c_1+1$.)
  So we have $d(a_{i,c_1}, a_{i,c_1+2}) \le 1$ for all $i\ge 3$.
  
  For all $i$, if $d(a_{i,c_1}, a_{i,c_1+2}) \le 1$, then $d_i \ne c_1+1$ and therefore $c_i\ne c_1$.
  So $c_i\ne c_1$ for all $i\ge 3$. However, we know that $c_{p+1} = c_1$. Contradiction.
  So $M$ is a Morse matching.
  
  By Corollary \ref{CoroDiagMatRuleImplyGraph}, the graph $G$ is diagonal.
  \end{proof}
  \subsection{Icosahedral graph}
  Hepworth and Willerton \cite{HW17} did computer computations and conjectured that the icosahedral graph is diagonal. We prove this using algebraic Morse theory.
  \begin{figure}[h]
  \includegraphics[scale=0.5]{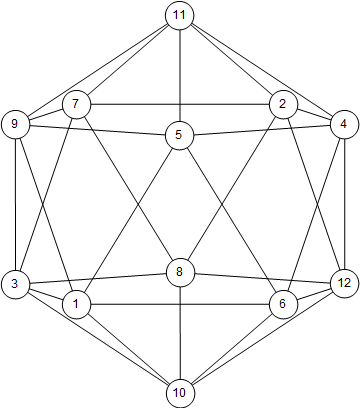}
  \caption{Icosahedral graph}
  \label{FigIco}
  \end{figure}
  \begin{thm}\label{ThmIco}
  The icosahedral graph is diagonal.
  \end{thm}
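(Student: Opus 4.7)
The plan is to mimic the strategy used for pawful graphs: construct a diagonal matching rule $F$ on $\MC_{*,*}(G)$, verify validity, and then prove the generated prefix matching is Morse, so that Corollary \ref{CoroDiagMatRuleImplyGraph} yields diagonality. Unlike the pawful case, the icosahedral graph has diameter $3$, so the rule needs cases governing both distance-$2$ and distance-$3$ adjacent transitions in a sequence, and the interaction between these cases is where the real work lies.

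To build $F$, I would exploit three structural features of the icosahedral graph $G$: (i) every pair of vertices $u,v$ with $d(u,v)=2$ has exactly two common neighbors, both lying on a geodesic from $u$ to $v$; (ii) every vertex has a unique antipode at distance $3$, and the neighbors of $u$ are exactly the antipodes of the neighbors of the antipode of $u$; (iii) given $u,v,w$ with $d(u,v)=d(v,w)=2$ and $d(u,w)\in\{1,2\}$, there is at least one common neighbor of all three (an analog of the pawful condition, which I would check directly by case analysis on the icosahedron). I would fix, once and for all, selection functions $f(u,v)$ for distance-$2$ pairs, $g(u,v,w)$ for the triples just described, and $h(u,v)$ picking an intermediate vertex for antipodal pairs. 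These play the role of the $f$ and $g$ of Theorem \ref{ThmPawful}.

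The matching rule would then be organized by the distance $d(x_{k-1},x_k)$ of the last step and by the shape of the immediately preceding prefix. For distance-$2$ steps the rule mirrors the pawful rule almost verbatim (insert $x_{k-2}$ if $d(x_{k-2},x_k)=1$, otherwise insert $g(x_{k-2},x_k,x_{k-1})$, with corresponding delete rules when the preceding vertex matches this inserted choice). For distance-$3$ steps, i.e.\ $(x_{k-1},x_k)$ antipodal, the rule would insert $h(x_{k-1},x_k)$ (or a contextual variant that looks at $x_{k-2}$ to ensure compatibility with the distance-$2$ rules that fire one step later), again paired with a companion delete rule. Verifying that $F$ is a valid diagonal matching rule is then case-by-case bookkeeping of the kind done in the proofs of Proposition \ref{PropTree} and Theorem \ref{ThmPawful}.

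The hard part, as expected, will be showing that $M$ is Morse. Suppose for contradiction that there is a zig-zag cycle $a_1\to b_1\to\cdots\to b_p\to a_{p+1}=a_1$ in $\Ga_{\MC_{*,*}(G)}^M$ with parameters $c_i, d_i$ as in Lemma \ref{LemmaNonMorse}. Lemma \ref{LemmaNonMorseDiag} forces $d_i=c_i+1$, and Corollary \ref{CoroNonMorseDiagDA} forces $d(a_{i,d_i-1},a_{i,d_i})=1$ at every step, so the only ``interesting'' position in each $a_i$ is a distance-$2$ or distance-$3$ jump at coordinate $d_i+1$. Rotating to make $c_1$ minimal and repeating the pawful argument, I expect to derive $c_{i+1}=c_1+1$ for $i=1$ and then $d(a_{i,c_1},a_{i,c_1+2})\le 1$ for all $i\ge 3$, which forces $c_i\ne c_1$ for all such $i$, contradicting $c_{p+1}=c_1$. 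The new difficulty is that a cycle could have $d_i+1$ coordinates where $d(a_{i,d_i-1},a_{i,d_i+1})=3$ rather than $2$, and then one must further check that the companion delete rule for antipodes cannot be invoked consistently around the cycle. This is where the specific arithmetic of antipodes in the icosahedron (the fact that antipodal pairs are rigid) should rule out any remaining cycle type. Once Morse-ness is established, diagonality is immediate from Corollary \ref{CoroDiagMatRuleImplyGraph}.
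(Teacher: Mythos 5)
Your high-level framework (build a diagonal matching rule, verify validity, establish Morse-ness, conclude via Corollary \ref{CoroDiagMatRuleImplyGraph}) is the same as the paper's, but there are two genuine gaps.

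First, your structural claim (iii) is false in the stated generality. If $u,v,w$ in the icosahedron are pairwise at distance $2$, they need not have a common neighbor: the vertices at distance $2$ from $v$ form a $5$-cycle, and if $u,w$ are two steps apart on that cycle, their only common neighbors are the intermediate distance-$2$ vertex and the antipode of $v$, neither of which is adjacent to $v$. You only ever need the case $d(u,w)=1$, which does hold, so this is a local imprecision rather than a fatal error, but it signals that the icosahedron does not satisfy a clean pawful-type property. The paper sidesteps this entirely: rather than picking a vertex adjacent to all of $u,v,w$, it fixes an orientation of the icosahedron to name the two common neighbors $g_L(v,w),g_R(v,w)$ of each distance-$2$ pair, and then uses \emph{selection} functions $\xi$ and $\zeta$ that pick whichever of $g_L,g_R$ is closer to the vertex before $v$ in the sequence. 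This consistent-choice mechanism is the actual engine of the construction and is stronger than your ``contextual variant that looks at $x_{k-2}$.''

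Second, and more seriously, the Morse-ness step will not go through by ``repeating the pawful argument.'' The pawful proof crucially establishes that once $d(a_{i,c_1},a_{i,c_1+2})\le 1$, this bound propagates forward, and this works because every insert in the pawful rule produces a vertex within distance $1$ of its grandparent $x_{k-2}$ (diameter $\le 2$). In the icosahedral rule, the distance-$3$ cases break this: when $d(x_{k-2},x_k)=3$, the inserted common neighbor of $x_{k-1},x_k$ sits at distance $\ge 2$ from $x_{k-2}$, so the invariant you are proposing to propagate fails immediately. The paper's Morse-ness proof therefore does not run a single propagation argument; instead it performs a six-way case analysis on which rule governs the edge $b_1\to a_2$, with sub-cases and finite ``check all relative positions'' verifications on the icosahedron, and uses the monotonicity of $d(a_{i,d_1},a_{i,d_1-2})$ under $\xi$ to collapse cases (4)--(6) into case (4) after rotation. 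That case structure, and the role of $\xi$/$\zeta$ in making it work, is the substantive content your outline is missing.
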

  \begin{proof}
  Let $G$ be the icosahedral graph. We fix an embedding of $G$ as the skeleton of an icosahedron in a three dimensional Euclidean space. See Figure \ref{FigIco}. Vertices $2$, $7$, $8$ are in the back and invisible, and the other vertices are in the front and visible.
  
  We choose a function $f: V(G) \to V(G)$ that maps a vertex $u$ to any vertex $v$ with $d(u,v)=1$.
  We define two functions $g_L, g_R: \{(u,v)\in V(G)\t V(G) : d(u,v) = 2\} \to V(G)$ as follows.
  Rotate (without reflection) the icosahedron so that $u$ is at the position of vertex $10$ in Figure \ref{FigIco}, and $v$ is at the position of vertex $5$. Then $g_L(u,v)$ is the vertex at the position of vertex $1$, and $g_R(u,v)$ is the vertex at the position of vertex $6$.
  It is easy to see that we have $d(u,g_L(u,v)) = d(v,g_L(u,v)) = d(u,g_R(u,v)) = d(v,g_R(u,v)) = 1$, $g_L(u,v) \ne g_R(u,v)$, and $g_L(u,v) = g_R(v,u)$.
  
  We define a function $\xi: \{(u,v,w) \in V(G)^{\t 3} : d(u,v)=1, d(v,w)=2\} \to V(G)$ that maps $(u,v,w)$ to the $x \in \{g_L(v,w), g_R(v,w)\}$ with smaller $d(x,u)$. (If there is tie, choose any.) Note that when $d(u,w)\ne 3$, there is no tie, and $d(u,\xi(u,v,w))\le 1$.
  We define a function $\ze: \{(u,v,w,x) \in V(G)^{\t 4} : d(u,v)=1, d(w,x)=2, d(v,x)=3, u\ne w\}$ that maps $(u,v,w,x)$ to the unique vertex $y\in \{g_L(w,x), g_R(w,x)\}$ with smaller $d(y, u)$.
  By checking all possible relative positions one can verify that $\ze$ is well-defined.
  
  Let us describe the matching rule. Fix a sequence $(x_0,\ldots,x_k)$ with unmatched prefix $(x_0,\ldots,x_{k-1})$.
  \begin{enumerate}
  \item If $k=1$ and $d(x_0,x_1)=3$, then $F(x_0,x_1) = \iota(f(x_0))$.
  \item If $k\ge 2$ and $d(x_{k-1},x_k)=3$, then $F(x_0,\ldots,x_k)=\iota(x_{k-2})$.
  \item If $k=1$ and $d(x_0,x_1)=2$, then $F(x_0,x_1) = \iota(g_L(x_0,x_1))$.
  \item If $k\ge 2$, $d(x_{k-1},x_k)=2$, $d(x_{k-2},x_k)\ne 3$, then $F(x_0,\ldots,x_k)=\iota(\xi(x_{k-2},x_{k-1},x_k))$.
  \item If $k=2$, $d(x_1,x_2)=2$, $d(x_0,x_2)=3$, $x_1\ne f(x_0)$, then $F(x_0,x_1,x_2)=\iota(g_L(x_1,x_2))$.
  \item If $k\ge 3$, $d(x_{k-1},x_k)=2$, $d(x_{k-2},x_k)=3$, $x_{k-1} \ne x_{k-3}$, then $F(x_0,\ldots,x_k) = \iota(\ze(x_{k-3},x_{k-2},x_{k-1},x_k))$.
  \item If $k=2$, $d(x_0,x_2)=3$, and $x_1 = f(x_0)$, then $F(x_0,x_1,x_2) = \de$.
  \item If $k\ge 3$, $d(x_{k-2},x_k)=3$, and $x_{k-3}=x_{k-1}$, then $F(x_0,\ldots,x_k) = \de$.
  \item If $k=2$, $d(x_0,x_2)=2$, and $x_1 = g_L(x_0,x_2)$, then $F(x_0,x_1,x_2) = \de$.
  \item If $k\ge 3$, $d(x_{k-2},x_k)=2$, $d(x_{k-3},x_k)\ne 3$, and $x_{k-1} = \xi(x_{k-3},x_{k-2},x_k)$, then $F(x_0,\ldots,x_k) = \de$.
  \item If $k=3$, $d(x_1, x_3) = 2$, $d(x_0,x_3)=3$, $x_1 \ne f(x_0)$, and $x_2 = g_L(x_1,x_3)$, then $F(x_0,x_1,x_2,x_3)=\de$.
  \item If $k\ge 4$, $d(x_{k-2},x_k)=2$, $d(x_{k-3},x_k)=3$, $x_{k-4}\ne x_{k-2}$, and $x_{k-1} = \ze(x_{k-4},x_{k-3},x_{k-2},x_k)$, then $F(x_0,\ldots,x_k) = \de$.
  \end{enumerate}
  
  Let us prove that $F$ is a valid matching rule. Note that $F$ is a diagonal matching rule.\footnote{Strictly speaking, we perform induction on $\ell$ and prove at the same time that $F$ is valid and diagonal up to sequences with $\ell(x_0,\ldots,x_k)\le \ell$. For simplicity, we use the fact that $F$ is diagonal during the proof that $F$ is valid. In proof of validity of rule (6), we also use validity of rule (10). Because we are doing induction, this is not circular argument.}
  \begin{enumerate}
  \item Suppose $k=1$ and $d(x_0,x_1)=3$. Then $d(x_0,f(x_0))=1$, so $F(x_0,f(x_0))=\ep$ and $F(x_0,f(x_0),x_1)=\de$ by rule (7).
  \item Suppose $k\ge 2$ and $d(x_{k-1},x_k)=3$.
  Because $(x_0,\ldots,x_{k-1})$ is unmatched, we have $d(x_{k-2},x_{k-1})=1$.
  So $F(x_0,\ldots,x_{k-1},x_{k-2}) \ne \iota(*)$, and thus $F(x_0,\ldots,x_{k-1},x_{k-2}) = \ep$.
  Therefore $F(x_0,\ldots,x_{k-1},x_{k-2},x_k)=\de$ by rule (8).
  \item Suppose $k=1$ and $d(x_0,x_1)=2$. Then $d(x_0,g_L(x_0,x_1))=1$, so $F(x_0,g_L(x_0,x_1))=\ep$ and $F(x_0,g_L(x_0,x_1),x_1)=\de$ by rule (9).
  \item Suppose $k\ge 2$, $d(x_{k-1},x_k)=2$, and $d(x_{k-2},x_k)\ne 3$.
  Because $d(x_{k-1},\xi(x_{k-2},x_{k-1},x_k))=1$, we have $F(x_0,\ldots,x_{k-1},\xi(x_{k-2},x_{k-1},x_k))\ne \iota(*)$.
  Because $d(x_{k-2},\xi(x_{k-2},x_{k-1},x_k))\le 1$, we have $F(x_0,\ldots,x_{k-1},\xi(x_{k-2},x_{k-1},x_k))\ne \de$.
  So $F(x_0,\ldots,x_{k-1},\xi(x_{k-2},x_{k-1},x_k)) = \ep$ and therefore 
  $F(x_0,\ldots,x_{k-1},\xi(x_{k-2},x_{k-1},x_k), x_k) = \de$ by rule (10).
  \item Suppose $k=2$, $d(x_1,x_2)=2$, $d(x_0,x_2)=3$, and $x_1\ne f(x_0)$.
  Because $d(x_1,g_L(x_1,x_2))=1$, we have $F(x_0,x_1,g_L(x_1,x_2))\ne \iota(*)$.
  Because $x_1 = g_R(x_0,g_L(x_1,x_2))$, we have $F(x_0,x_1,g_L(x_1,x_2))\ne \de$.
  So $F(x_0,x_1,g_L(x_1,x_2))=\ep$ and thus $F(x_0,x_1,g_L(x_1,x_2),x_2) = \de$ by rule (11).
  \item Suppose $k\ge 3$, $d(x_{k-1},x_k)=2$, $d(x_{k-2},x_k)=3$, and $x_{k-1} \ne x_{k-3}$. 
  Because $(x_0,\ldots,x_{k-1})$ is unmatched, we have $d(x_{k-3},x_{k-2})=d(x_{k-2},x_{k-1})=1$.
  Write $u = \ze(x_{k-3},x_{k-2},x_{k-1},x_k)$. Because $d(x_{k-1}, u) = 1$, we have $F(x_0,\ldots,x_{k-1},u)\ne \iota(*)$.
  By analyzing all four cases of $x_{k-3}$, we can see that $d(x_{k-3},u)\ne 3$, and $\xi(x_{k-3},x_{k-2},u)\ne x_{k-1}$. So $F(x_0,\ldots,x_{k-2},u) = \iota(\xi(x_{k-3},x_{k-2},u)) \ne \iota(x_{k-1})$ by rule (4), and $F(x_0,\ldots,x_{k-1},u)\ne \de$.
  So $F(x_0,\ldots,x_{k-1},u) = \ep$, and thus $F(x_0,\ldots,x_{k-1},u,x_k) = \de$ by rule (12).
  \item Suppose $k=2$, $d(x_0,x_2)=3$, and $x_1 = f(x_0)$. Then $F(x_0,x_2) = \iota(x_1)$ by rule (1).
  \item Suppose $k\ge 3$, $d(x_{k-2},x_k)=3$, and $x_{k-3}=x_{k-1}$. Then $F(x_0,\ldots,x_{k-2},x_k) = \iota(x_{k-1})$ by rule (2).
  \item Suppose $k=2$, $d(x_0,x_2)=2$, and $x_1 = g_L(x_0,x_2)$. Then $F(x_0,x_2) = \iota(x_1)$ by rule (3).
  \item Suppose $k\ge 3$, $d(x_{k-2},x_k)=2$, $d(x_{k-3},x_k)\ne 3$, and $x_{k-1} = \xi(x_{k-3},x_{K-2},x_k)$. Then $F(x_0,\ldots,x_{k-2},x_k) = \iota(x_{k-1})$ by rule (4).
  \item Suppose $k=3$, $d(x_1, x_3) = 2$, $d(x_0,x_3)=3$, $x_1 \ne f(x_0)$, and $x_2 = g_L(x_1,x_3)$. Then $F(x_0,x_1,x_3) = \iota(x_2)$ by rule (5).
  \item Suppose $k\ge 4$, $d(x_{k-2},x_k)=2$, $d(x_{k-3},x_k)=3$, $x_{k-4}\ne x_{k-2}$, and $x_{k-1} = \ze(x_{k-4},x_{k-3},x_{k-2},x_k)$. Then $F(x_0,\ldots,x_{k-2},x_k) = \iota(x_{k-1})$ by rule (6).
  \end{enumerate}
  
  Let $M$ be the prefix matching generated by $F$. Let us prove that $M$ is a Morse matching.
  Work in the setting of Lemma \ref{LemmaNonMorse} and Lemma \ref{LemmaNonMorseDiag}. 
  Suppose in $\Ga_{\MC_{*,*}(G)}^M$ there is a directed cycle $a_1 \to b_1 \to \cdots \to b_p \to a_{p+1} = a_1$. Define $d_i,c_i,u_i$ accordingly.
  By rotating, we can assume WLOG that $c_1$ is the smallest $c_i$ among all $i$'s.
  By Corollary \ref{CoroNonMorseDiagDA}, we have $d_i = c_i+1$ for all $i$.
  Let us do a case analysis depending on which rule the edge $b_1 \to a_2$ uses.
  
  \textbf{Case (1).} The edge $b_1 \to a_2$ uses rule (1).
  Then $d(a_{2,d_1},a_{2,d_1+1}) = 2$.
  By Lemma \ref{LemmaNonMorse}, we have $d_2 = d_1+1$.
  By Corollary \ref{CoroNonMorseDiagDA}, we have $d(a_{2,d_2-1},a_{2,d_2})=1$.
  Contradiction.
  
  \textbf{Case (2).} The edge $b_1 \to a_2$ uses rule (2).
  Then $d(a_{2,d_1},a_{2,d_1+1}) = 2$.
  By Lemma \ref{LemmaNonMorse}, we have $d_2 = d_1+1$.
  By Corollary \ref{CoroNonMorseDiagDA}, we have $d(a_{2,d_2-1},a_{2,d_2})=1$.
  Contradiction.
  
  \textbf{Case (3).} The edge $b_1 \to a_2$ uses rule (3). There are two sub-cases.
  
  \textbf{Sub-case (3.1).} There does not exist $i$ such that $d_i=3$.
  Then by Lemma \ref{LemmaNonMorse} and Lemma \ref{LemmaNonMorseDiag}, and that $d_i\ge 1$ for all $i$, we have $d_1=1$, $d_2=2$, $d_3=1$, $d_4=2$, $\cdots$.
  By checking all possible relative positions of $(a_{1,0},a_{1,1},a_{1,2},a_{1,3})$, we see that a cycle is not formed.\footnote{There are only a few possible relative positions. Same for the other checks.}
  
  \textbf{Sub-case (3.2).} There exist $i$ such that $d_i = 3$. Take the smallest such $i$.
  By Lemma \ref{LemmaNonMorse} and Lemma \ref{LemmaNonMorseDiag}, there exists $i^\p$ with $d_{i^\p}=2$. Then by Corollary \ref{CoroNonMorseDiagDA}, for all $j$, we have $d(a_{j,0},a_{j,1}) = d(a_{j,1},a_{j,2})=d(a_{j,2},a_{j,3})=1$.
  
  By checking all possible relative positions of $(a_{i,0},\ldots,a_{i,4})$, we see that $d(a_{i+1,3},a_{i+1,0})\le 2$.
  So $d(a_{j,3},a_{j,0}) \le 2$ for all $j \ge i+1$.
  
  Take the smallest $i^\p \ge i+1$ such that $d_{i^\p}=2$.
  By checking all possible relative positions of $(a_{i^\p,0},\ldots,a_{i^\p,3})$, we see that $d(a_{i^\p+1,2},a_{i^\p+1,0})\le 1$.
  So $d(a_{j,2},a_{j,0})\le 1$ for all $j\ge i^\p+1$.
  
  Take the smallest $i^\pp \ge i^\p+1$ such that $d_{i^\pp}=1$.
  There exist such $i^\pp$ because $d_{1+np}=d_1=1$ for all $n$.
  However, because $d(a_{i^\pp,2},a_{i^\pp,0})=1$, such $i^\pp$ cannot exist.
  Contradiction.
  
  \textbf{Case (4).} The edge $b_1 \to a_2$ uses rule (4).
  There are two sub-cases.
  
  \textbf{Sub-case (4.1).} $d(a_{2,d_1},a_{2,d_1-2}) = 0$.
  Let $i$ be the smallest integer $\ge 2$ such that $d_i = d_1$.
  Note that $d(a_{2,d_1},a_{2,d_1+1}) = 1$.
  
  For $2\le j<i$, if $d_j\ne d_1+1$, then $d(a_{j+1,d_1},a_{j+1,d_1+1})=d(a_{j,d_1},a_{j,d_1+1})=1$; if $d_j=d_1+1$, then $d(a_{j,d_1},a_{j,d_1+1})=1$ by Corollary \ref{CoroNonMorseDiagDA}.
  So $d(a_{i,d_1},a_{i,d_1+1})=1$ by induction.
  It is also clear that $d(a_{i,d_1},a_{i,d_1-2})=0$.
  
  So $d(b_{i,d_1-1},b_{i,d_1}) = d(a_{i,d_1-1},a_{i,d_1+1})=2$.
  Also, $d(b_{i,d_1},b_{i,d_1-2}) = d(a_{i,d_1+1},a_{i,d_1-2}) = d(a_{i,d_1+1},a_{i,d_1}) = 1$.
  So rule (4) applies to $(b_{i,0},\ldots,b_{i,d_1})$, and $F(b_{i,0},\ldots,b_{i,d_1}) = \iota(b_{i,d_1-2})$.
  This means $a_i=a_{i+1}$, and edges $a_i\to b_i$ and $b_i\to a_{i+1}$ cannot both exist in $\Ga_{\MC_{*,*}(G)}^M$. Contradiction.
  
  \textbf{Sub-case (4.2).} $d(a_{2,d_1},a_{2,d_1-2}) = 1$.
  Let $i$ be the smallest integer $\ge 2$ such that $d_i = d_1$.
  By the same reason as sub-case (4.1), we have $d(a_{i,d_1},a_{i,d_1+1})=1$, and $d(a_{i,d_1},a_{i,d_1-2})=1$.
  So $d(b_{i,d_1-1},b_{i,d_1}) = d(a_{i,d_1-1},a_{i,d_1+1})=2$.
  Also, $d(b_{i,d_1},b_{i,d_1-2}) = d(a_{i,d_1+1},a_{i,d_1-2}) \le  d(a_{i,d_1+1},a_{i,d_1}) + d(a_{i,d_1},a_{i,d_1-2})= 2$.
  So rule (4) applies to $(b_{i,0},\ldots,b_{i,d_1})$.
  
  If $a_{i+1,d_1} = a_{i,d_1}$, then $a_{i+1} = a_i$, and edges $a_i\to b_i$ and $b_i\to a_{i+1}$ cannot both exist in $\Ga_{\MC_{*,*}(G)}^M$.
  So $a_{i+1,d_1}\ne a_{i,d_1}$. Then by rule (4) and properties of $\xi$, we have $d(a_{i+1,d_1},a_{i+1,d_1-2}) < d(a_{i,d_1},a_{i,d_1-2})=1$.
  We can rotate $a_i$ to $a_1$ and reduce to Sub-case (4.1).
  
  \textbf{Case (5).} The edge $b_1 \to a_2$ uses rule (5).
  Let $i$ be the smallest integer $\ge 2$ such that $d_i = d_1$.

  If $a_{i,d_i+1} = a_{2,d_i+1}$, then $(a_{2,0},\ldots,a_{2,d_i+1}) = (a_{i,0},\ldots,a_{i,d_i+1})$.
  So $(b_{1,0},\ldots,b_{1,d_i}) = (b_{i,0},\ldots,b_{i,d_i})$, and by the edge $b_1 \to a_2$, we have $(a_{i+1,0},\ldots,a_{i+1,d_i+1}) = (a_{2,0},\ldots,a_{2,d_i+1})$.
  Then $a_i = a_{i+1}$, and edges $a_i \to b_i$ and $b_i\to a_{i+1}$ cannot both exist in $\Ga_{\MC_{*,*}(G)}^M$. Contradiction.
  
  So $a_{i,d_i+1} \ne a_{2,d_i+1}$. This means $d(b_{i,d_i}, b_{i,d_i-2})\ne 3$. We can rotate $a_i$ to $a_1$ and reduce to Case (4).

  \textbf{Case (6).} The edge $b_1 \to a_2$ uses rule (6).
  Let $i$ be the smallest integer $\ge 2$ such that $d_i = d_1$.
  By the same reason as Case (5), we have $a_{i,d_i+1} \ne a_{2,d_i+1}$ and $d(b_{i,d_i}, b_{i,d_i-2})\ne 3$. We can rotate $a_i$ to $a_1$ and reduce to Case (4).
  
  So all cases lead to contradiction. Therefore $M$ is a Morse matching.
  
  By Corollary \ref{CoroDiagMatRuleImplyGraph}, the icosahedral graph is diagonal.
  \end{proof}
  \subsection{Odd cycles}
  Hepworth and Willerton \cite{HW17} computed the first magnitude homology groups of small cycles using computer program, and made conjectures for magnitude homology groups of cycles in general.
  We prove their conjectures using algebraic Morse theory.
  We compute for odd cycles in this section, and for even cycles in the next section.
  It turns out that magnitude homology of odd cycles has more complicated description but easier computation.
  \begin{thm}\label{ThmOddCycle}
  Fix an integer $m\ge 2$. 
  The magnitude homology of $C_{2m+1}$ is described as follows.
  \begin{enumerate}
  \item All groups $\MH_{k,l}(C_{2m+1})$ are torsion-free.
  \item Define a function $T: \bZ\t \bZ \to \bZ$ as
  \begin{enumerate}
  \item $T(k,l)=0$ if $k<0$ or $l<0$;
  \item $T(0,0) = 2m+1$, $T(1, 1) = 4m+2$;
  \item $T(k,l) = T(k-1,l-1) + 2T(k-2,l-(m+1))$ for $(k,l)\ne (0,0)$ and $(1,1)$.
  \end{enumerate}
  Then $\rk \MH_{k,l}(C_{2m+1}) = T(k,l)$ for all $k$ and $l$.
  \end{enumerate}
  \end{thm}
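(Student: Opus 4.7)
The plan is to apply algebraic Morse theory in the style of Section \ref{SecComp}, constructing a matching rule $F$ whose unmatched sequences realize the combinatorial recurrence $T(k, l) = T(k-1, l-1) + 2T(k-2, l-(m+1))$. Identify $V(C_{2m+1})$ with $\bZ/(2m+1)\bZ$; since the two arcs joining any two distinct vertices have lengths summing to the odd number $2m+1$, one is strictly shorter, so $C_{2m+1}$ is geodetic and the map $\sigma$ sending $(u, v)$ with $u \ne v$ to the unique neighbor of $u$ on the geodesic from $u$ to $v$ is well-defined. I would design $F$ so that the unmatched sequences are parametrized by the data underlying the recurrence: a starting vertex $x_0$ (contributing to $T(0, 0) = 2m+1$), optionally followed by a choice of $x_1$ fixing a travel direction (contributing to $T(1, 1) = 4m+2$), followed by an alternating pattern of \emph{short moves} (append one vertex at distance $1$ continuing in the current direction, contributing $(+1, +1)$ to $(k, l)$) and \emph{wraps} (append two vertices $(y, z)$ whose two steps sum to distance $m+1$, with $2$ canonical choices indexed by which arc is traversed, contributing $(+2, +m+1)$). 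In the spirit of the tree and pawful proofs, the rules should apply $\de$ to sequences with $x_{k-1} = \sigma(x_{k-2}, x_k)$ when no wrap is in progress, apply $\iota(\sigma(x_{k-1}, x_k))$ when a short move is available and no wrap should be initiated, and use additional rules to mark the two-step wrap positions.

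Validity of $F$ is a routine verification parallel to the pawful and icosahedral cases. The main technical step is the Morse property. By Lemma \ref{LemmaNonMorse}, a zig-zag cycle $a_1 \to b_1 \to \cdots \to b_p \to a_1$ in $\Ga_{\MC_{*,*}(C_{2m+1})}^M$ must satisfy $d_{i+1} \ne c_i+1$, $d_{i+1} \le c_i+2$, and $d_i \le c_i+1$; combining these with the explicit distance constraints in $C_{2m+1}$, I would argue that a ``wrap index'' invariant associated to each prefix $a_i$ is monotone along the cycle and strictly changes at some edge, contradicting $a_{p+1} = a_1$. The argument proceeds by a case analysis on which rule produces each edge $b_i \to a_{i+1}$, in the style of the icosahedron proof.

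With the Morse property established, the unmatched sequences are in bijection with the combinatorial objects counted by $T(k, l)$, giving $\rk \MC^\circ_{k, l}(C_{2m+1}) = T(k, l)$. To conclude, I would verify that the induced differential $\der^\circ$ on $\MC^\circ_{*, *}(C_{2m+1})$ vanishes on unmatched sequences: each face $\der_i$ in the original complex is nonzero only when consecutive distances of $(x_{i-1}, x_i, x_{i+1})$ add up to $d(x_{i-1}, x_{i+1})$, and the wrap structure is designed so that this additivity fails at interior wrap positions; meanwhile any zig-zag path joining two distinct unmatched sequences would violate the wrap constraints. Theorem \ref{ThmMorse} then yields $\MH_{k, l}(C_{2m+1}) \cong \bZ^{T(k, l)}$, giving both torsion-freeness and the rank formula. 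The hard part will be the Morse property, since the cycle's chain complex supports many zig-zag configurations that must be simultaneously excluded by a carefully chosen invariant on prefixes.
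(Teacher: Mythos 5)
Your proposal identifies the same strategy as the paper (a prefix matching rule built from the geodesic step function $\sigma$, so that the unmatched sequences realize the recurrence for $T$, and then an argument that the reduced differential vanishes), but there is a real gap between a plan and a proof here, and some of the details you sketch are actually off.

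First, the matching rule itself is never pinned down. Phrases like ``when no wrap is in progress'' and ``when a short move is available and no wrap should be initiated'' do not define a matching rule in the sense of Definition~\ref{DefnMatRule}. The paper's $F$ is the tree rule \emph{plus a single explicit exception}: do not insert when the sequence ends in a step of distance $1$ immediately followed by a step of distance $m$ in the same direction (the condition $\chi(x_{k-2},x_{k-1},x_k)$). That clause is the entire content of the construction, and it is precisely what your vague wording does not determine. Second, your description of the unmatched sequences does not match what a tree-style $F$ leaves unmatched: you say short moves ``append one vertex at distance $1$ continuing in the current direction,'' but two consecutive direction-preserving distance-$1$ steps are exactly the configurations where $x_{k-1} = \sigma(x_{k-2},x_k)$, so rule~(1) matches them away. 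In the actual unmatched sequences the $(+1,+1)$ steps are direction \emph{reversals}, and the $(+1,+m)$ step followed by either a same- or opposite-direction $(+1,+1)$ step together play the role of your ``wrap'' with two choices. Saying ``alternating'' is also misleading; the recurrence $T(k,l)=T(k-1,l-1)+2T(k-2,l-m-1)$ reflects a free choice of short move or wrap at each stage, not an alternation. Third, the Morse property is left as a placeholder: a ``wrap index invariant monotone along the cycle'' names a hoped-for argument rather than giving one. Note that $F$ here is \emph{not} a diagonal matching rule, so Lemma~\ref{LemmaNonMorseDiag} does not apply directly; the paper supplies a substitute (Lemma~\ref{LemmaOddCycleSub}) whose proof rests on the observation that unmatched sequences have no outgoing edges in $\Ga_{\MC_{*,*}(G)}$. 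Once $d_i=c_i+1$ is established, Lemma~\ref{LemmaNonMorseDC} and geodeticity of $C_{2m+1}$ force $a_1=a_2$, and the contradiction is quick. Your sketch of the vanishing of $\der^\circ$ via failure of additivity at wrap positions is the right idea, but the clean statement (and what actually justifies $\der^\circ=0$) is that unmatched sequences have no outgoing edges at all, not merely that zig-zag paths between unmatched sequences are blocked. As written, the proposal is a plausible outline that still needs the explicit rule, the validity check, the structural lemma replacing Lemma~\ref{LemmaNonMorseDiag}, and the acyclicity argument.
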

  \begin{proof}
  Label the vertices of $G=C_{2m+1}$ as $\{1,\ldots,2m+1\}$ such that $i$ is adjacent to $i+1$ for all $i$, and $2m+1$ adjacent to $1$.
  We define signed distance $\De : V(G) \t V(G) \to \{-m,\ldots,m\}$ such that $\De(u,v)$ is the unique number in $\{-m,\ldots,m\}$ in the same modulo-$(2m+1)$ equivalence class as $v-u$.
  Define $\sgn: \bZ \to \{-1,0,1\}$ that maps positive integers to $1$, negative integers to $-1$, and $0$ to $0$.
  It is easy to see that $|\De(u,v)| = d(u,v)$ and $\De(u,v) = -\De(v,u)$. We also have $d(u,v) + d(v,w) = d(u,w)$ if and only if $|\De(u,v)| + |\De(v,w)| \le m$ and $\sgn(\De(u,v)) = \sgn(\De(v,w))$.
  
  Define a function $\sm : \{(u,v)\in V(G) \t V(G) : d(u,v) \ge 2\} \to V(G)$ that maps $(u,v)$ to the unique vertex $w$ with $d(u,w)=1$ and $d(u,w)+d(w,v)=d(u,v)$.
  For three vertices $(u,v,w)$, let $\chi(u,v,w)$ denote the proposition $\sgn(\De(u,v)) = \sgn(\De(v,w))\land d(u,v)=1\land d(v,w)=m$.
  
  Let us describe the matching rule.
  Fix a sequence $(x_0,\ldots,x_k)$ with unmatched prefix $(x_0,\ldots,x_{k-1})$.
  \begin{enumerate}
  \item If $k\ge 2$ and $x_{k-1} = \sm(x_{k-2},x_k)$, then $F(x_0,\ldots,x_k) = \de$.
  \item If $k\ge 1$, $d(x_{k-1},x_k)\ge 2$, not $(k\ge 2\land x_{k-1}=\sm(x_{k-2},x_k))$, and not $(k\ge 2 \land \chi(x_{k-2},x_{k-1},x_k))$, then $F(x_0,\ldots,x_k)=\iota(\sm(x_{k-1},x_k))$.
  \end{enumerate}
  
  Let us prove that $F$ is a valid matching rule.
  \begin{enumerate}
  \item Suppose $k\ge 2$ and $x_{k-1} = \sm(x_{k-2},x_k)$. Clearly $d(x_{k-2},x_k) \ge 2$. If $k\ge 3$ and $x_{k-2} = \sm(x_{k-3},x_k)$, then $x_{k-2} = \sm(x_{k-3},x_{k-1})$, and $F(x_0,\ldots,x_{k-1}) = \de$, which is not true. So we have $\lnot (k\ge 3\land x_{k-2} = \sm(x_{k-3},x_k))$.
  If $k\ge 3$ and $\chi(x_{k-3},x_{k-2},x_k)$, then $x_{k-2} = \sm(x_{k-3},x_{k-1})$, and $F(x_0,\ldots,x_{k-1}) = \de$, which is not true.
  So $F(x_0,\ldots,x_{k-2},x_k) = \iota(\sm(x_{k-2},x_k)) = \iota(x_{k-1})$.
  \item Suppose $k\ge 1$, $d(x_{k-1},x_k)\ge 2$, not $(k\ge 2\land x_{k-1}=\sm(x_{k-2},x_k))$, and not $(k\ge 2 \land \chi(x_{k-2},x_{k-1},x_k))$.
  Because $d(x_{k-1},\sm(x_{k-1},x_k)) = 1$, we have $F(x_0,\ldots,x_{k-1},\sm(x_{k-1},x_k)) \ne \iota(*)$.
  If $k\ge 2$ and $x_{k-1} = \sm(x_{k-2},\sm(x_{k-1},x_k))$, then either $x_{k-1} = \sm(x_{k-2},x_k)$, or $(k\ge 2 \land \chi(x_{k-2},x_{k-1},x_k))$, neither of which is true.
  So we have $F(x_0,\ldots,x_{k-1},\sm(x_{k-1},x_k)) = \ep$ and $F(x_0,\ldots,x_{k-1},\sm(x_{k-1},x_k),x_k) = \de$.
  \end{enumerate}
  
  The unmatched sequences are described as follows.
  \begin{enumerate}
  \item $(v)$ is unmatched for any vertex $v$.
  \item $(u,v)$ is unmatched for $d(u,v)=1$.
  \item If $(x_0,\ldots,x_k)$ is unmatched with $k\ge 1$ and $d(x_{k-1},x_k)=1$, then 
  $(x_0,\ldots,x_k,v)$ is unmatched, where $d(x_k,v) = m$ and $\sgn(\De(x_{k-1},x_k)) = \sgn(\De(x_k,v))$.
  \item If $(x_0,\ldots,x_k)$ is unmatched with $k\ge 1$ and $d(x_{k-1},x_k)=m$, then 
  $(x_0,\ldots,x_k,v)$ is unmatched, where $d(x_k,v) = 1$ and $\sgn(\De(x_{k-1},x_k)) = \sgn(\De(x_k,v))$.
  \item If $(x_0,\ldots,x_k)$ is unmatched with $k\ge 1$, then 
  $(x_0,\ldots,x_k,v)$ is unmatched, where $d(x_k,v) = 1$ and $\sgn(\De(x_{k-1},x_k)) \ne \sgn(\De(x_k,v))$.
  \end{enumerate}
  Note that no unmatched sequences have outgoing edges in $\Ga_{\MC_{*,*}(G)}$.
  
  Let $M$ be the prefix matching generated by $F$.
  Let us prove that $M$ is a Morse matching.
  Work in the setting of Lemma \ref{LemmaNonMorse}.
  Suppose in $\Ga_{\MC_{*,*}(G)}^M$ there is a directed cycle $a_1 \to b_1 \to \cdots \to b_p \to a_{p+1} = a_1$. Define $d_i,c_i,u_i$ accordingly.
  Although $F$ is not a diagonal matching rule, it satisfies some good properties that a diagonal matching rule has. We prove the following analogue of Lemma \ref{LemmaNonMorseDiag}.
  \begin{lemma}\label{LemmaOddCycleSub}
  We have $d_i = c_i+1$ for all $i$.
  \end{lemma}
  \begin{proof}
  Suppose $(b_{i,0},\ldots,b_{i,d_i})$ is unmatched.
  Then $c_i\ge d_i$, $(a_{i+1,0},\ldots,a_{i+1,d_i}) = (b_{i,0},\ldots,b_{i,d_i})$, and $(a_{i+1,0},\ldots,a_{i+1,d_i+1})$ is unmatched (by valid property (1) in Definition \ref{DefnMatRule}).
  Because an unmatched sequence has no outgoing edges,
  we have $d_{i+1} \ge d_i+1$ and $(b_{i+1,0},\ldots,b_{i+1,d_i}) = (b_{i,0},\ldots,b_{i,d_i})$.
  
  Suppose for some $j\ge i+1$, we have $(b_{j,0},\ldots,b_{j,d_i}) = (b_{i,0},\ldots,b_{i,d_i})$.
  By the same reasoning, we have $d_{j+1} \ge d_i+1$, and $(b_{j+1,0},\ldots,b_{j+1,d_i}) = (b_{i,0},\ldots,b_{i,d_i})$.
  Applying induction, we see that $d_j\ge d_i+1$ for all $j\ge i+1$.
  This means $d_{i+p}\ge d_i+1$, which cannot be true.
  
  So $(b_{i,0},\ldots,b_{i,d_i})$ is matched.
  By definition of prefix matchings, the matching state of $(b_{i,0},\ldots,b_{i,d_i})$ is the same as that of $b_i$, which is insert($c_i$, $u_i$).
  In particular, $c_i+1 \le d_i$.
  By Lemma \ref{LemmaNonMorse}, we have $d_i \le c_i+1$.
  So $d_i = c_i+1$.
  \end{proof}
  Now we return to the proof that $M$ is a Morse matching.
  By Lemma \ref{LemmaOddCycleSub} and Lemma \ref{LemmaNonMorseDC}, we have $d(a_{1,d_1-1},a_{1,d_1})=1$.
  Because of the edge $a_1 \to b_1$, we have $d(a_{1,d_1-1}, a_{1,d_1}) + d(a_{1,d_1}, a_{1,d_1+1}) = d(a_{1,d_1-1}, a_{1,d_1+1})$.
  So $a_{1,d_1} = \sm(a_{1,d_1-1}, a_{1,d_1+1})$.
  Also, by the edge $b_1 \to a_2$, we have $a_{2,c_1+1} = \sm(a_{2,c_1}, a_{2,c_1+2})$.
  By Lemma \ref{LemmaOddCycleSub}, we have $c_1 = d_1-1$.
  Also, we have $a_{1,j} = b_{1,j} = a_{2,j}$ for $j\le d_1-1$, and $a_{1,j} = b_{1,j-1} = a_{2,j}$ for $j\ge d_1+1$.
  So $a_{2,d_1} = a_{1,d_1}$, and therefore $a_2 = a_1$.
  Then edges $a_1 \to b_1$ and $b_1\to a_2$ cannot both exist in $\Ga_{\MC_{*,*}(G)}^M$. Contradiction.
  So $M$ is a Morse matching.
  
  Because no unmatched sequences have any outgoing edges, all differentials $\der^\circ : \MC_{k,l}^\circ(G)\to \MC_{k-1,l}^\circ(G)$ are zeros.
  Therefore the homology classes of unmatched sequences form a basis of $\MH_{*,*}(G)$.
  It is not hard to see that $\rk \MH_{*,*}(G)$ is as described in the theorem statement.
  \end{proof}
  \subsection{Even cycles}
  \begin{thm}\label{ThmEvenCycle}
  Fix an integer $m\ge 3$.
  The magnitude homology of $C_{2m}$ is described as follows.
  \begin{enumerate}
  \item All groups $\MH_{k,l}(C_{2m})$ are torsion-free.
  \item Define a function $T: \bZ \t \bZ \to \bZ$ as
  \begin{enumerate}
  \item $T(k,l)=0$ if $k<0$ or $l<0$;
  \item $T(0,0) = 2m$, $T(1,1) = 4m$;
  \item $T(k,l) = \max\{T(k-1,l-1), T(k-2,l-m)\}$ for $(k,l)\ne (0,0)$ and $(1,1)$.
  \end{enumerate}
  Then $\rk \MH_{k,l}(C_{2m}) = T(k,l)$ for all $k$ and $l$.
  \end{enumerate}
  \end{thm}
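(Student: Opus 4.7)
The plan is to mirror the proof of Theorem~\ref{ThmOddCycle}, constructing a prefix matching on $\MC_{*,*}(C_{2m})$ via a matching rule based on a shrink-toward function $\sm$ together with a $\chi$-type predicate. The main new complication is that in $C_{2m}$ an antipodal pair (one with $d(u,v)=m$) admits two geodesics, so $\sm(u,v)$ is not canonically defined there.

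First I would set up notation analogous to the odd case: label vertices $\{1,\ldots,2m\}$, define a signed distance $\De$ with a tie-breaking convention at distance $m$ (for example, always choosing the positive direction around the cycle), and adapt the functions $\sm$ and $\sgn$. The matching rule would then broadly repeat the odd-cycle rule, but with additional cases for antipodal steps: because $\sm(x_{k-1},x_k)$ now selects only one of two legitimate shrink-neighbors when $d(x_{k-1},x_k)=m$, the $\chi$-predicate and the insert/delete clauses must be redesigned so that the asymmetric choice does not create a zigzag cycle in $\Ga_{\MC_{*,*}(G)}^M$.

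Validity of the rule should follow by the same inductive routine used in Theorem~\ref{ThmOddCycle}. For the Morse property, I would apply Lemma~\ref{LemmaNonMorse} and Lemma~\ref{LemmaNonMorseDC}, prove an analogue of Lemma~\ref{LemmaOddCycleSub} to force $d_i=c_i+1$ along any putative zigzag cycle, and then run the same edge-counting argument as in the odd case. I expect an extra sublemma is needed specifically to handle zigzag cycles passing through antipodal transitions, since that is precisely where the two-geodesic ambiguity could bite.

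The main obstacle, and the place where the even case genuinely differs from the odd case, is the final step: analyzing the unmatched sequences and the induced differential $\der^\circ$ on $\MC^\circ_{*,*}$. In the odd case $\der^\circ$ vanished outright because no unmatched sequence had outgoing edges in $\Ga_{\MC_{*,*}(G)}$, so ranks added. In the even case, the presence of $\max$ in the recurrence, together with the replacement of $l-(m+1)$ by $l-m$, strongly suggests that ``antipodal'' unmatched sequences at bidegree $(k,l)$ come in a family of rank $T(k-2,l-m)$ and pair off under $\der^\circ$ with part of the ``adjacent'' family of rank $T(k-1,l-1)$. With $\MC^\circ_{k,l}$ of rank $T(k-1,l-1)+T(k-2,l-m)$ and an injective pairing of rank $\min$, the surviving homology would have rank $\max\{T(k-1,l-1), T(k-2,l-m)\}$, matching the stated formula and automatically yielding torsion-freeness. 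Making this identification of $\der^\circ$ explicit and checking that the pairing is indeed a split injection (rather than landing in torsion) is where I expect the genuine work to lie; once done, the rank bookkeeping gives the recurrence for $T$.
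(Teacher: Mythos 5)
Your plan matches the paper's scaffolding — signed distance with asymmetric tie-break at distance $m$, a modified $\sm$ and $\chi$, a prefix matching, validity, and analogues of Lemma~\ref{LemmaOddCycleSub} (the paper proves two sublemmas, Lemma~\ref{LemmaEvenCycleSub1} and Lemma~\ref{LemmaEvenCycleSub2}, and the paper's Morse argument is indeed more delicate than in the odd case, roughly for the reason you flag). You also correctly identify that the central structural object is a new kind of critical chain built around antipodal steps (in the paper these are the ``special'' sequences, those $(x_0,\ldots,x_{2j})$ with $\chi(x_{2i},x_{2i+1},x_{2i+2})$ for all $i$).

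However, the final step of your proposal — where the $\max$ is supposed to come from — is wrong. You hypothesize that $\MC^\circ_{k,l}$ has rank $T(k-1,l-1)+T(k-2,l-m)$ and that a nonzero $\der^\circ$ pairs off a sub-block of rank $\min$, leaving rank $\max$. That arithmetic does not work ($A+B$ minus a pairing of size $\min\{A,B\}$ on a two-term piece leaves $|A-B|$, not $\max\{A,B\}$), and it is also not what happens. The paper shows that \emph{all} the reduced differentials $\der^\circ$ vanish, exactly as in the odd case: every unmatched sequence $(x_0,\ldots,x_k)$ has the invariant $\rho(x_0,\ldots,x_k)=\tfrac{2(l-k)}{m-2}$, which forces $I_{k,l}^\circ$ and $I_{k-1,l}^\circ$ to be simultaneously nonempty only possibly when $m=3$, and for $m=3$ a residue argument on $\De(x_0,x_k)\bmod 6$ rules out any zig-zag path between them. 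Consequently $\rk\MH_{k,l}$ is simply the \emph{count} of unmatched sequences, and torsion-freeness is automatic. The $\max$ in the recurrence is purely a bookkeeping device encoding that count (a special sequence of length $2j+1$ occurs exactly in bidegree $(2j,jm)$, and its nontrivial one-step extensions occur in bidegrees $(2j+t,jm+t)$ for $t\ge1$), not a record of cancellation. So if you pursue your cancellation picture you will not be able to make it close; you need instead to establish that unmatched sequences in adjacent homological degrees never share an $l$ (or, for $m=3$, never share an endpoint class), killing $\der^\circ$ outright.
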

  \begin{proof}
  Label the vertices of $G=C_{2m}$ as $\{1,\ldots,2m\}$ such that $i$ is adjacent to $i+1$ for all $i$, and $2m$ adjacent to $1$. We define a signed distance $\De: V(G) \t V(G) \to \{-m+1,\ldots,m\}$ such that $\De(u,v)$ is the unique number in $\{-m+1,\ldots,m\}$ in the same modulo-$(2m)$ equivalence class as $v-u$. Define $\sgn: \bZ \to \{-1,0,1\}$ that maps positive integers to $1$, negative integers to $-1$, and $0$ to $0$.
  
  Define a function $\sm: \{(u,v) \in V(G) \t V(G) : d(u,v) \ge 2\}$ that maps $(u,v)$ to the unique vertex $w$ with $d(u,w)=1$ and $\sgn(\De(u,w)) = \sgn(\De(u,v))$.
  Note that when $d(u,v)\le m-1$, $\sm(u,v)$ is the unique vertex $w$ with $d(u,w)=1$ and $d(u,w)+d(w,v)=d(u,v)$.
  For three vertices $(u,v,w)$, let $\chi(u,v,w)$ denote the proposition $\De(u,v)=-1 \land \De(v,w)=-m+1$.
  
  Let us describe the matching rule. Fix a sequence $(x_0,\ldots,x_k)$ with unmatched prefix $(x_0,\ldots,x_{k-1})$.
  \begin{enumerate}
  \item If $k\ge 2$ and $x_{k-1} = \sm(x_{k-2},x_k)$, then $F(x_0,\ldots,x_k) = \de$.
  \item If $k\ge 3$, $\De(x_{k-3},x_{k-2})=1$, and $\chi(x_{k-2},x_{k-1},x_k)$, then $F(x_0,\ldots,x_k) = \de$.
  \item If $k\ge 1$, $d(x_{k-1},x_k)\ge 2$, not $(k\ge 2 \land x_{k-1} = \sm(x_{k-2},x_k))$, not $(k\ge 2\land \chi(x_{k-2},x_{k-1},x_k))$, and not $(k\ge 2 \land \chi(x_{k-1},x_{k-2},x_k))$, then $F(x_0,\ldots,x_k) = \iota(\sm(x_{k-1},x_k))$.
  \item If $k\ge 2$ and $\chi(x_{k-1},x_{k-2},x_k)$, then $F(x_0,\ldots,x_k) = \iota(x_{k-2})$.
  \end{enumerate}
  
  Let us prove that $F$ is a valid matching rule.
  \begin{enumerate}
  \item Suppose $k\ge 2$ and $x_{k-1} = \sm(x_{k-2},x_k)$.
  Clearly $d(x_{k-2},x_k) \ge 2$.
  If $k\ge 3$ and $x_{k-2} = \sm(x_{k-3},x_k)$, then $x_{k-2} = \sm(x_{k-3},x_{k-1})$ and $F(x_0,\ldots,x_{k-1}) = \de$, which is not true. So we have $\lnot (x\ge 3 \land x_{k-2} = \sm(x_{k-3},x_k))$.
  If $k\ge 3$ and $\chi(x_{k-3},x_{k-2},x_k)$, then $x_{k-2} = \sm(x_{k-3},x_{k-1})$ and $F(x_0,\ldots,x_{k-1}) = \de$, which is not true.
  So we have $\lnot (k\ge 4 \land \chi(x_{k-3},x_{k-2},x_k))$.
  If $k\ge 3$ and $\chi(x_{k-2},x_{k-3},x_k)$, then $x_{k-2} = \sm(x_{k-3},x_{k-1})$ and $F(x_0,\ldots,x_{k-1}) = \de$, which is not true.
  So $F(x_0,\ldots,x_{k-2},x_k) = \iota(\sm(x_{k-2},x_k)) = \iota(x_{k-1})$ by rule (3).
  \item Suppose $k\ge 3$, $\De(x_{k-3},x_{k-2})=1$, and $\chi(x_{k-2},x_{k-1},x_k)$.
  Then $x_{k-3} = x_{k-1}$, and $F(x_0,\ldots,x_{k-2},x_k) = \iota(x_{k-3}) = \iota(x_{k-1})$ by rule (4).
  \item Suppose $k\ge 1$, $d(x_{k-1},x_k)\ge 2$, not $(k\ge 2 \land x_{k-1} = \sm(x_{k-2},x_k))$, not $(k\ge 2 \land \chi(x_{k-2},x_{k-1},x_k))$, and not $(k\ge 2 \land \chi(x_{k-1},x_{k-2},x_k))$.
  Because $d(x_{k-1},\sm(x_{k-1},x_k))=1$, we have $F(x_0,\ldots,x_{k-1},\sm(x_{k-1},x_k)) \ne \iota(*)$.
  If $k\ge 2$ and $x_{k-1} = \sm(x_{k-2}, \sm(x_{k-1},x_k))$, then there are three possibilities.
  \begin{enumerate}
  \item $x_{k-1} = \sm(x_{k-2},x_k)$;
  \item $\chi(x_{k-1},x_{k-2},x_k)$;
  \item $\chi(x_{k-2},x_{k-1},x_k)$. 
  \end{enumerate}
  None of these can be true.
  So we have $\lnot (k\ge 2 \land x_{k-1} = \sm(x_{k-2},\sm(x_{k-1},x_k)))$.
  Because $m\ge 3$, rule (2) does not apply to the sequence $(x_0,\ldots,x_{k-1},\sm(x_{k-1},x_k))$.
  So $F(x_0,\ldots,x_{k-1},\sm(x_{k-1},x_k))\ne \de$.
  Therefore $F(x_0,\ldots,x_{k-1},\sm(x_{k-1},x_k)) = \ep$ and thus $F(x_0,\ldots,x_{k-1},\sm(x_{k-1},x_k),x_k) = \de$ by rule (1).
  \item Suppose $k\ge 2$ and $\chi(x_{k-1},x_{k-2},x_k)$.
  Because $d(x_{k-2},x_{k-1})=1$, we have $F(x_0,\ldots,x_{k-1},x_{k-2}) \ne \iota(*)$.
  Clearly $F(x_0,\ldots,x_{k-1},x_{k-2})\ne \de$.
  So $F(x_0,\ldots,x_{k-1},x_{k-2}) = \ep$.
  Because $\chi(x_{k-1},x_{k-2},x_k)$ is true, we have $F(x_0,\ldots,x_{k-1},x_{k-2},x_k) = \de$ by rule (2).
  \end{enumerate}
  
  We say a sequence $(x_0,\ldots,x_k)$ is special if $k$ is even (can be zero), and $\chi(x_{2i},x_{2i+1},x_{2i+2})$ holds for all $i$.
  The unmatched sequences are described as follows.
  \begin{enumerate}
  \item A special sequence is unmatched.
  \item If $(x_0,\ldots,x_k)$ is a special sequence, then $(x_0,\ldots,x_k,v)$ is unmatched, where $d(x_k,v)=1$.
  \item If $(x_0,\ldots,x_k)$ is unmatched with $d(x_{k-1},x_k)=1$, then $(x_0,\ldots,x_k,v)$ is unmatched, where $d(x_k,v)=1$ and $\sgn(\De(x_{k-1},x_k))\ne \sgn(\De(x_k,v))$.
  \end{enumerate}
  
  For a sequence $(x_0,\ldots,x_k)$ define $\rho(x_0,\ldots,x_k)$ to be the largest integer $j$ such that $(x_0,\ldots,x_j)$ is a special sequence.
  
  Let $M$ be the prefix matching generated by $F$.
  Let us prove that $M$ is a Morse matching.
  Work in the setting of Lemma \ref{LemmaNonMorse}.
  Suppose in $\Ga_{\MC_{*,*}(G)}^M$ there is a directed cycle $a_1\to b_1 \to \cdots \to b_p \to a_{p+1} = a_1$. Define $d_i$, $c_i$, $u_i$ accordingly.
  \begin{lemma}\label{LemmaEvenCycleSub1}
  We have $d_i \ge \rho(a_i)+1$ for all $i$.
  \end{lemma}
  \begin{proof}
  Because special sequences are unmatched, we have $c_i \ge \rho(b_i)$ and $\rho(a_{i+1}) = \rho(b_i)$ for all $i$.
  If $d_i\le \rho(a_i)$ for some $i$, then $\rho(a_{i+1}) = \rho(b_i) = 2\lfloor \frac {d_i-1}2\rfloor$.
  If $d_i\ge \rho(a_i)+1$, then $\rho(a_{i+1}) = \rho(b_i) \ge \rho(a_i)$.
  
  Suppose $d_1 \le \rho(a_1)$, and that $d_1$ is the smallest among all $d_i\le \rho(a_i)$.
  Then $\rho(a_2) = 2\lfloor \frac{d_1-1}2\rfloor$ is the smallest among all $\rho(a_i)$'s.
  So $d_i\ge 2\lfloor \frac{d_1-1}2\rfloor+1$ for all $i$.
  There are two cases depending on parity of $d_1$.
  
  \textbf{Case 1: $d_1$ is odd.}
  Because $(a_{1,0},\ldots,a_{1,d_1+1})$ is a special sequence, it is easy to check that the conditions of rule (3) holds for $(b_{1,0},\ldots,b_{1,d_1})$.
  So $c_1 = d_1-1$, $F(b_{1,0},\ldots,b_{1,d_1}) = \iota(\sm(b_{1,d_1-1},b_{1,d_1}))$, and $\De(a_{2,d_1-1},a_{2,d_1}) = 1$.
  
  Let us prove by induction that $(a_{i,0},\ldots,a_{i,d_1}) = (a_{2,0},\ldots,a_{2,d_1})$ for all $i\ge 2$.
  The case $i=2$ is trivial.
  Suppose $(a_{i,0},\ldots,a_{i,d_1}) = (a_{2,0},\ldots,a_{2,d_1})$.
  Because $d_1$ is odd, we have $d_i \ge d_1$.
  
  Suppose $d_i = d_1$.
  Because $\De(a_{i,d_1-1},a_{i,d_1})=1$ and $d(a_{i,d_1-1},a_{i,d_1}) + d(a_{i,d_1},a_{i,d_1+1}) = d(a_{i,d_1-1}, a_{i,d_1+1})$, we have $a_{i,d_1} = \sm(a_{i,d_1-1}, a_{i,d_1+1})$.
  Because $(a_{i,0},\ldots,a_{i,d_1-1}) = (a_{1,0},\ldots,a_{1,d_1-1})$ is a special sequence, we can verify that the conditions of rule (3) hold for $(b_{i,0},\ldots,b_{i,d_1})$.
  So $c_i = d_i-1$ and $F(b_{i,0},\ldots,b_{i,d_1}) = \iota(\sm(b_{1,d_1-1},b_{1,d_1}))$.
  This implies $a_{i+1} = a_i$, and edges $a_i\to b_i$ and $b_i\to a_{i+1}$ cannot both exist in $\Ga_{\MC_{*,*}(G)}^M$. Contradiction.
  
  So $d_i > d_1$.
  By Lemma \ref{LemmaNonMorse}, we have $c_i \ge d_i-1 \ge d_1$ and $(a_{i+1,0},\ldots,a_{i+1,d_1}) = (a_{i,0},\ldots,a_{i,d_1})$. This completes the induction step.
  
  So $(a_{i,0},\ldots,a_{i,d_1}) = (a_{2,0},\ldots,a_{2,d_1})$ for all $i\ge 2$.
  However, this cannot be true for $i=p+1$ because $a_{1,d_1}\ne a_{2,d_1}$. Contradiction.
  
  \textbf{Case 2: $d_1$ is even.}
  Because $\De(a_{1,d_1-1},a_{1,d_1}) = -m+1$ and $d(a_{1,d_1-1},a_{1,d_1}) + d(a_{1,d_1},a_{1,d_1+1}) = d(a_{1,d_1-1}, a_{1,d_1+1})$, we know that $d(a_{1,d_1-1},a_{1,d_1+1})=m$.
  Because $(a_{1,0},\ldots,a_{1,d_1})$ is a special sequence, it is easy to check that the conditions of rule (3) holds for $(b_{1,0},\ldots,b_{1,d_1})$.
  So $c_1 = d_1-1$, $F(b_{1,0},\ldots,b_{1,d_1}) = \iota(\sm(b_{1,d_1-1}, b_{1,d_1}))$, and $\De(a_{2,d_1-1}, a_{2,d_1}) = 1$.
  (Note that $a_{2,d_1} = a_{2,d_1-2}$.)
  
  Let us prove by induction that $(a_{i,0},\ldots,a_{i,d_1}) = (a_{2,0},\ldots,a_{2,d_1})$ for all $i\ge 2$.
  The case $i=2$ is trivial.
  Suppose $(a_{i,0},\ldots,a_{i,d_1}) = (a_{2,0},\ldots,a_{2,d_1})$.
  Because $d_1$ is odd, we have $d_i \ge d_1-1$.
  Because $a_{i,d_1} = a_{i,d_1-2}$, we have $d_i \ne d_1-1$.
  By the same reason as Case 1, we have $d_i \ne d_1$.
  So $d_i > d_1$. By Lemma \ref{LemmaNonMorse}, we have $c_i \ge d_i-1 \ge d_1$ and $(a_{i+1,0},\ldots,a_{i+1,d_1}) = (a_{i,0},\ldots,a_{i,d_1})$. This completes the induction step.
  
  So $(a_{i,0},\ldots,a_{i,d_1}) = (a_{2,0},\ldots,a_{2,d_1})$ for all $i\ge 2$.
  However, this cannot be true for $i=p+1$ because $a_{1,d_1}\ne a_{2,d_1}$. Contradiction.
  
  So both Case 1 and Case 2 lead to contradiction.
  \end{proof}
  \begin{lemma}\label{LemmaEvenCycleSub2}
  We have $d_i = c_i+1$ for all $i$.
  \end{lemma}
  \begin{proof}
  The proof mimics that of Lemma \ref{LemmaOddCycleSub}.
  Suppose $(b_{i,0},\ldots,b_{i,d_i})$ is unmatched.
  Then $c_i\ge d_i$, $(a_{i+1,0},\ldots,a_{i+1,d_i}) = (b_{i,0},\ldots,b_{i,d_i})$, and $(a_{i+1,0},\ldots,a_{i+1,d_i+1})$ is unmatched (by valid property (1) in Definition \ref{DefnMatRule}).
  By Lemma \ref{LemmaEvenCycleSub1}, we have $d_{i+1}\ge \rho(a_{i+1})+1$.
  By analyzing unmatched sequences we can see that $d_{i+1}\ge d_i+1$, and $(b_{i+1,0},\ldots,b_{i+1,d_i}) = (b_{i,0},\ldots,b_{i,d_i})$.
  
  Suppose for some $j\ge i+1$, we have $(b_{j,0},\ldots,b_{j,d_i}) = (b_{i,0},\ldots,b_{i,d_i})$. By the same reasoning, we have $d_{j+1} \ge d_i+1$, and $(b_{j+1,0},\ldots,b_{j+1,d_i}) = (b_{i,0},\ldots,b_{i,d_i})$. Applying induction, we see that $d_j\ge d_i+1$ for all $j\ge i+1$.
  This means $d_{i+p}\ge d_i+1$, which cannot be true.
  
  So $(b_{i,0},\ldots,b_{i,d_i})$ is matched. 
  By definition of prefix matchings, the matching state of $(b_{i,0},\ldots,b_{i,d_i})$ is the same as that of $b_i$, which is insert($c_i$, $u_i$). In particular, $c_i+1\le d_i$.
  By Lemma \ref{LemmaNonMorse}, we have $d_i\le c_i+1$. So $d_i = c_i+1$.
  \end{proof}
  Now we return to the proof that $M$ is a Morse matching.
  By rotating, we can WLOG assume that $c_1$ is the smallest $c_i$ among all $i$'s.
  By Lemma \ref{LemmaEvenCycleSub2} and Lemma \ref{LemmaNonMorseDC}, we have $d(a_{i,d_i-1},a_{i,d_i})=1$ for all $i$.
  
  By Lemma \ref{LemmaNonMorse}, we have $d_2 \le c_1+2$ and $d_2 \ne c_1+1$.
  So $d_2 = c_1+2=d_1+1$ and therefore $c_2 = d_1$ by Lemma \ref{LemmaEvenCycleSub2}.
  By definition of $F$, we have $d(a_{2,d_1-1},a_{2,d_1})=d(a_{3,d_2-1},a_{3,d_2})=1$.
  Because $d_2 = d_1+1$, we have $d(a_{3,d_1-1},a_{3,d_1})=1$.
  
  Let us prove by induction that $(a_{i,0},\ldots,a_{i,d_1}) =(a_{3,0},\ldots,a_{3,d_1})$ and $d(a_{i,d_1}, a_{i,d_1+1})=1$ for all $i\ge 3$.
  The case $i=3$ is trivial. Suppose $(a_{i,0},\ldots,a_{i,d_1}) = (a_{3,0},\ldots,a_{3,d_1})$ and $d(a_{i,d_1}, a_{i,d_1+1})=1$. By assumption, $d_i \ge d_1$.
  
  Suppose $d_i = d_1$.
  Because $d(a_{i,d_1-1},a_{i,d_1}) = d(a_{i,d_1},a_{i,d_1+1})=1$ and $d(a_{i,d_1-1},a_{i,d_1}) + d(a_{i,d_1},a_{i,d_1+1}) = d(a_{i,d_1-1},a_{i,d_1+1})$, we have $a_{i,d_1} = \sm(a_{i,d_1-1},a_{i,d_1+1})$.
  By Lemma \ref{LemmaEvenCycleSub2}, $c_i = d_i-1$.
  Because $d(a_{i,d_1-1},a_{i,d_1+1}) = 2 < m$, rule (4) does not apply to $(b_{i,0},\ldots,b_{i,d_1})$.
  So rule (3) must apply to $(b_{i,0},\ldots,b_{i,d_1})$.
  This means $F(b_{i,0},\ldots,b_{i,d_1}) = \iota(\sm(b_{i,d_1-1},b_{i,d_1})) = \iota(a_{i,d_1})$.
  This implies $a_{i+1}=a_i$, and edges $a_i\to b_i$ and $b_i\to a_{i+1}$ cannot both exist in $\Ga_{\MC_{*,*}(G)}^M$. Contradiction.
  
  So $d_i > d_1$. By Lemma \ref{LemmaNonMorse}, we have $c_i \ge d_i-1\ge d_1$ and $(a_{i+1,0},\ldots,a_{i+1,d_i}) = (a_{3,0},\ldots,a_{3,d_i})$.
  If $d_i = d_1+1$, then by Lemma \ref{LemmaEvenCycleSub2}, $c_i = d_1$, and we have $d(a_{i+1,d_1},a_{i+1,d_1+1})=1$ by definition of $F$.
  If $d_i \ge d_1+2$, then $c_i\ge d_i-1\ge d_1+1$ and $d(a_{i+1,d_1},a_{i+1,d_1+1})=d(a_{i,d_1},a_{i,d_1+1})=1$.
  So in either case the induction step is completed.
  
  So $(a_{i,0},\ldots,a_{i,d_1}) =(a_{3,0},\ldots,a_{3,d_1})$ for all $i\ge 3$.
  However, this cannot be true for $i=p+1$ because $a_{1,d_1}\ne a_{2,d_1} = a_{3,d_1}$. Contradiction.
  So $M$ is a Morse matching.
  
  Let us analyze the differentials.
  Note that all sequences $(x_0,\ldots,x_k)$ in $I_{k,l}^\circ$ have $\rho(x_0,\ldots,x_k) = \frac{2(l-k)}{m-2}$.
  Because $\rho(x_0,\ldots,x_k)$ must be an even number, we have $I_{k,l}^\circ = \es$ if $(m-2) \nmid (l-k)$. 
  So when $m\ne 3$, there do not exist $k$ and $l$ such that $I_{k,l}^\circ$ and $I_{k-1,l}^\circ$ are both nonempty.
  
  Now suppose $m=3$. For all sequences $(x_0,\ldots,x_k)$ in $I_{k,l}^\circ$, we have
  $$\De(x_0,x_k) \in \{3(l-k) + w : w\in \{-1,0,1\}\} \pmod 6.$$
  Let $A_{k,l}$ denote the set $\{3(l-k) + w : w\in \{-1,0,1\}\} \bmod 6$.
  Then $A_{k,l}$ and $A_{k-1,l}$ are disjoint.
  On the other hand, if there is a zig-zag path $\ga \in \Ga_{x,y}^M$ for two sequences $(x_0,\ldots,x_k)$ and $(y_0,\ldots,y_{k-1})$, then we must have $x_0=y_0$ and $x_k=y_{k-1}$, and therefore $\De(x_0,x_k) = \De(y_0,y_{k-1})$.
  So $\Ga_{x,y}^M = \es$ for $x\in I_{k,l}^\circ$ and $y\in I_{k-1,l}^\circ$.
  
  So all differentials $\der^\circ : \MC_{k,l}^\circ(G) \to \MC_{k-1,l}^\circ(G)$ are zeros.
  Therefore the homology classes of unmatched sequences form a basis of $\MH_{*,*}(G)$. It is not hard to see that $\rk \MH_{*,*}(G)$ is as described in the theorem statement.
  \end{proof}
  
  \appendix
  \section{Magnitude homology is stronger than magnitude} \label{SecMHvsMag}
  Hepworth and Willerton \cite{HW17} asked whether there exist graphs with the same magnitude but different magnitude homology. In this appendix we answer the question in the affirmative by giving explicit examples.
  For computing magnitude homology, we use Sage and Python program \texttt{rational\_graph\_homology\_arxiv.py} written by Simon Willerton and James Cranch, which can be found in the arXiv version of \cite{HW17}.
  
  We follow notations of Leinster \cite{Lei17}.
  Let $G$ be a finite simple undirected connected graph. 
  Its magnitude $\#G$ is an element of $\bZ[[q]]\cap \bQ(q)$, i.e., it is both a power series with coefficients in $\bZ$, and a rational function.
  
  The following lemma is useful for proving two vertex-transitive graphs have the same magnitude.
  \begin{lemma}[Speyer, in Leinster \cite{Lei17}]\label{LemmaMagVT}
  Let $G$ be a vertex-transitive graph and $a \in V(G)$ be a fixed vertex.
  The magnitude of $G$ is given by
  $$\#G = \frac{|V(G)|}{\sum_{x\in V(G)} q^{d(a,x)}}.$$
  \end{lemma}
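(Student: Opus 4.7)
The approach is to invoke the weighting description of magnitude and exploit vertex-transitivity to write down an explicit constant weighting. Recall from Leinster \cite{Lei17} that the magnitude of a finite graph $G$ (with shortest-path metric) can be computed as $\#G = \sum_{x \in V(G)} w(x)$, where the weighting $w : V(G) \to \bZ[[q]] \cap \bQ(q)$ is characterized by the linear system $\sum_{y \in V(G)} q^{d(x,y)} w(y) = 1$ for every $x \in V(G)$. Equivalently, $w = Z^{-1}\vec{1}$ where $Z = (q^{d(x,y)})_{x,y}$ is the similarity matrix; this inverse exists over $\bQ(q)$ because $Z$ reduces to the identity at $q = 0$.

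First I would observe that vertex-transitivity forces all row sums of $Z$ to coincide: for any $x \in V(G)$, pick a graph automorphism $\sigma$ with $\sigma(a) = x$; since $\sigma$ is a distance-preserving bijection on $V(G)$, reindexing $y \mapsto \sigma(y)$ gives $\sum_{y} q^{d(x,y)} = \sum_{y} q^{d(a,y)}$. Denote this common value by $S := \sum_{y \in V(G)} q^{d(a,y)}$. Its constant term is $1$ (from $y = a$), so $S$ is a unit in $\bZ[[q]]$ as well as a nonzero element of $\bQ(q)$.

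Next I would try the constant ansatz $w(x) = 1/S$ for all $x$. Substituting into the weighting equation at an arbitrary vertex $x$ yields $\sum_{y} q^{d(x,y)} \cdot (1/S) = S / S = 1$, so this constant function is indeed a weighting. By uniqueness of the weighting (equivalently, invertibility of $Z$ over $\bQ(q)$), it must be the weighting that computes the magnitude. Summing gives $\#G = \sum_{x \in V(G)} w(x) = |V(G)|/S$, exactly the claimed formula.

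I do not anticipate a serious obstacle here; the entire argument is a short linear-algebra observation, and the only thing to verify carefully is that $S$ is genuinely invertible in the intended coefficient ring (which follows from the $q = 0$ constant term and Leinster's convention of working in $\bZ[[q]] \cap \bQ(q)$). The essential input is the symmetry principle that vertex-transitivity equalizes row sums of $Z$, which in turn forces the unique weighting to be constant.
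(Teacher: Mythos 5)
The paper states Lemma \ref{LemmaMagVT} without proof, attributing it to Speyer via Leinster \cite{Lei17}, so there is no in-paper argument to compare against. Your proof is the standard (and correct) one: vertex-transitivity makes every row sum of the similarity matrix $Z = (q^{d(x,y)})_{x,y}$ equal to $S = \sum_{y}q^{d(a,y)}$, the constant function $w \equiv 1/S$ therefore satisfies the weighting equations $Zw = \vec{1}$, and uniqueness of the weighting (from invertibility of $Z$ over $\bQ(q)$, which follows since $Z$ is the identity at $q=0$) forces this to be the weighting computing $\#G = \sum_x w(x) = |V(G)|/S$. This matches the argument recorded in Leinster's paper, and I see no gaps.
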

  
  The first example we give is the $4\t 4$ rook graph and the Shrikhande graph.
  The $4\t 4$ rook graph is the Cayley graph on $\bZ/4\bZ \t \bZ/4\bZ$ with generators $\{(0,x), (x,0) : x=1,2,3\}$. 
  The Shrikhande graph is the Cayley graph on $\bZ/4\bZ \t \bZ/4\bZ$ with generators $\{\pm(0,1), \pm(1, 0), \pm(1,1)\}$.
  
  \begin{figure}[h]
  \includegraphics[scale=0.3]{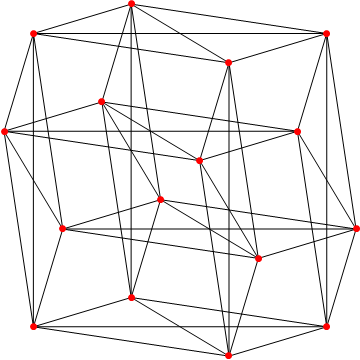}
  \hspace{1cm}
  \includegraphics[scale=0.3]{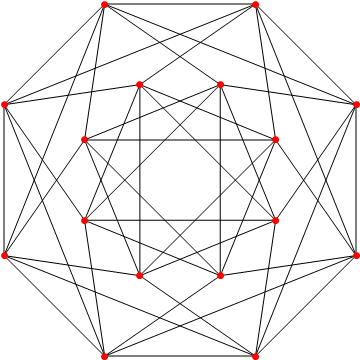}
  \caption{Left: $4\t 4$ rook graph; Right: Shrikhande graph}
  \end{figure}
  
  \begin{prop}
  The $4\t 4$ rook graph and the Shrikhande graph have the same magnitude but different magnitude homology.
  \end{prop}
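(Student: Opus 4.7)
The plan is to verify equality of magnitudes via Speyer's lemma (Lemma \ref{LemmaMagVT}) and then to distinguish the magnitude homologies by direct computation with the Cranch--Willerton program.

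First I would check that both graphs are vertex-transitive, which is immediate since they are Cayley graphs of $\bZ/4\bZ \t \bZ/4\bZ$. So Lemma \ref{LemmaMagVT} applies, and it suffices to show that both graphs have the same distance distribution from a fixed basepoint (say the identity $a = (0,0)$). For the $4\t 4$ rook graph, the six generators $\{(0,1),(0,2),(0,3),(1,0),(2,0),(3,0)\}$ give the $6$ neighbors of $(0,0)$, and any remaining non-identity vertex $(i,j)$ with $i,j\ne 0$ is at distance exactly $2$ (reached by $(0,0)\to(i,0)\to(i,j)$); this accounts for the remaining $9$ vertices. So the distribution is $(1,6,9)$ at distances $(0,1,2)$. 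For the Shrikhande graph, the six generators $\{\pm(0,1),\pm(1,0),\pm(1,1)\}$ again give $6$ neighbors; the graph is strongly regular with parameters $(16,6,2,2)$ (in particular has diameter $2$), so the other $9$ vertices are at distance $2$. The two distance distributions coincide, so by Lemma \ref{LemmaMagVT} both magnitudes equal
\begin{equation*}
\frac{16}{1 + 6q + 9q^2}.
\end{equation*}

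Next I would run \texttt{rational\_graph\_homology\_arxiv.py} on both graphs and compare the resulting bigraded tables $\MH_{k,l}$. The ranks of the magnitude homology groups must agree on the diagonal $k = l$ if the graphs had the same magnitude homology, but more subtly, agreement along the diagonal is forced anyway by the equality of magnitudes together with the diagonal-rank formula of Hepworth--Willerton. So the distinguishing feature will appear off-diagonal: one of the graphs (whichever turns out to be ``less diagonal'') will exhibit some nonzero $\MH_{k,l}$ with $k \ne l$ whose rank differs from its counterpart in the other graph. I would tabulate $\MH_{k,l}$ for small $k,l$ (large enough for a discrepancy to appear) and explicitly point out a bidegree $(k,l)$ where the ranks differ.

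The main obstacle here is not conceptual but computational: the magnitude chain complex grows rapidly in $k$ and $l$, and one must run the computation far enough that an off-diagonal discrepancy is exposed while remaining within the range where the program terminates in reasonable time. Since both graphs have $16$ vertices and diameter $2$, the relevant chain groups $\MC_{k,l}$ are of manageable size for small $k,l$, and experience (from the Hepworth--Willerton tables in \cite{HW17}) suggests a discrepancy will already appear at a modest bidegree. Once such a bidegree is located, the proof is concluded by simply reporting the two ranks.
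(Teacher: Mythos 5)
Your proposal follows essentially the same route as the paper: both use Speyer's Lemma \ref{LemmaMagVT} on the two vertex-transitive graphs with distance distribution $(1,6,9)$ to get magnitude $16/(1+6q+9q^2)$, and then rely on the Cranch--Willerton program to exhibit an off-diagonal rank discrepancy. The paper records the specific discrepancy at bidegree $(k,l)=(3,4)$, where $\rk\MH_{3,4}(R_{4,4})=0$ but $\rk\MH_{3,4}(\Shr)=144$.
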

  \begin{proof}
  Let $R_{4,4}$ denote the $4\t 4$ rook graph and let $\Shr$ denote the Shrikhande graph.
  Because $R_{4,4}$ and $\Shr$ are both Cayley graphs, they are both vertex transitive. So Lemma \ref{LemmaMagVT} applies.
  Simple calculation shows that for a fixed vertex in $R_{4,4}$, there are $6$ vertices with distance $1$ and $9$ vertices with distance $2$. The same holds for $\Shr$.
  Therefore $$\# R_{4,4} = \# \Shr = \frac{16}{1 + 6q + 9q^2}.$$
  
  On the other hand, computer computation shows that $\rk \MH_{3,4}(R_{4,4})=0$ while $\rk \MH_{3,4}(\Shr)\ne 0$ (Table \ref{TabMHR44} and Table \ref{TabMHShr}). Therefore $R_{4,4}$ and $\Shr$ have different magnitude homology.
\begin{table}[h]
\begin{tabular}{r|rrrrrrr}
&0&1&2&3&4&5&6\\ \hline
0&16\\
1&&96\\
2&&&432\\
3&&&&1728\\
4&&&&&6480\\
5&&&&&&23328\\
6&&&&&&&81648
\end{tabular}
\caption{Magnitude homology of the $4\t 4$ rook graph}
\label{TabMHR44}
\end{table}
\begin{table}[h]
\begin{tabular}{r|rrrrrrr}
&0&1&2&3&4&5&6\\ \hline
0&16\\
1&&96\\
2&&&432\\
3&&&&1728\\
4&&&&144&6624\\
5&&&&&1632&24960\\
6&&&&&&11824&93472
\end{tabular}
\caption{Magnitude homology of the Shrikhande graph}
\label{TabMHShr}
\end{table}
  \end{proof}
  
  Another example is the dodecahedral graph and the Desargues graph.
  \begin{figure}[h]
  \includegraphics[scale=0.3]{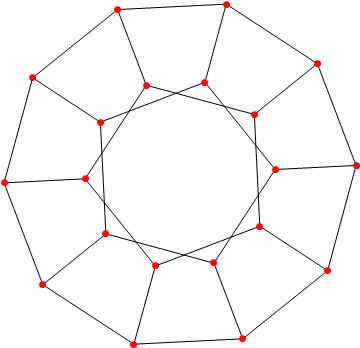}
  \hspace{1cm}
  \includegraphics[scale=0.3]{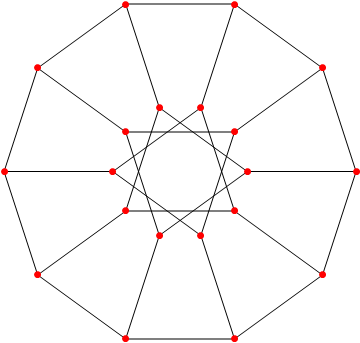}
  \caption{Left: dodecahedral graph; Right: Desargues graph}
  \end{figure}
  \begin{prop}
  The dodecahedral graph and the Desargues graph have the same magnitude but different magnitude homology.
  \end{prop}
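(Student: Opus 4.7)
The plan mirrors the proof of the analogous proposition about the $4\t 4$ rook graph and the Shrikhande graph. First I would observe that both the dodecahedral graph $\Dod$ and the Desargues graph $\Des$ are vertex-transitive: the former is the $1$-skeleton of the dodecahedron whose symmetry group acts transitively on vertices, and the latter is a well-known symmetric (in fact distance-transitive) bipartite graph on $20$ vertices. Hence Lemma \ref{LemmaMagVT} applies to each.

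Next I would compute the distance distribution from a fixed vertex. Both graphs have $20$ vertices and are $3$-regular of diameter $5$, and a direct inspection of the two graphs shows that from any fixed vertex the number of vertices at distances $0,1,2,3,4,5$ is $1,3,6,6,3,1$ in both cases. Plugging into Lemma \ref{LemmaMagVT} then yields
\begin{align*}
\#\Dod = \#\Des = \frac{20}{1+3q+6q^2+6q^3+3q^4+q^5}.
\end{align*}

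For the magnitude homology, I would invoke the Sage/Python implementation of Cranch and Willerton (exactly as in the previous proposition) to tabulate $\rk\MH_{k,l}$ for both graphs in a small range of $(k,l)$. The tables will agree in the diagonal entries (as they must, since the diagonal contributions are determined by the magnitude) but will differ in some off-diagonal entry, which establishes that the magnitude homologies are non-isomorphic. I would then present the two tables in the same format as Tables \ref{TabMHR44} and \ref{TabMHShr} and point to the specific discrepant $(k,l)$-cell.

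The only nontrivial step is the third one, since it is a computer calculation rather than a conceptual argument; the first two steps are short structural observations. The main thing to verify carefully by hand is the distance distribution claim, which can be done directly from the standard drawings of the two graphs (e.g.\ from Figure shown in the excerpt).
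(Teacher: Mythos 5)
Your proposal matches the paper's proof: both observe vertex-transitivity, apply Lemma \ref{LemmaMagVT} with the common distance distribution $1,3,6,6,3,1$ to obtain $\#\Dod=\#\Des=\frac{20}{1+3q+6q^2+6q^3+3q^4+q^5}$, and then rely on the Cranch--Willerton computer program to exhibit a discrepant rank (the paper points to $\rk\MH_{2,4}(\Dod)\ne 0$ while $\rk\MH_{2,4}(\Des)=0$). One small inaccuracy in your parenthetical aside: equality of magnitude only forces equality of the alternating sums $\sum_k(-1)^k\rk\MH_{k,l}$, not of the individual diagonal ranks $\rk\MH_{l,l}$, so for non-diagonal graphs the diagonal entries need not a priori agree --- they happen to agree here, but that is an observation from the tables, not a consequence of equal magnitude.
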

  \begin{proof}
  Let $\Dod$ denote the dodecahedral graph and let $\Des$ denote the Desargues graph.
  It is not hard to see that both $\Dod$ and $\Des$ are vertex-transitive graphs, and therefore Lemma \ref{LemmaMagVT} applies.
  For a fixed vertex in $\Dod$, there are $3$ vertices with distance $1$, $6$ vertices with distance $2$, $6$ vertices with distance $3$, $3$ vertices with distance $4$ and $1$ vertex with distance $5$. The same holds for $\Des$.
  So we have
  $$\#\Dod=\#\Des=\frac{20}{1+3q+6q^2+6q^3+3q^4+q^5}.$$
  
  On the other hand, computer computation shows $\rk \MH_{2,4}(\Dod)\ne 0$ while $\rk \MH_{2,4}(\Des)=0$ (Table \ref{TabMHDod} and Table \ref{TabMHDes}). Therefore $\Dod$ and $\Des$ have different magnitude homology.
  \begin{table}[h]
\begin{tabular}{r|rrrrrrrrr}
&0&1&2&3&4&5&6&7&8\\ \hline
0&20\\
1&&60\\
2&&&60\\
3&&&120&60\\
4&&&60&360&60\\
5&&&&380&600&60\\
6&&&&60&1320&840&60\\
7&&&&&1020&3240&1080&60\\
8&&&&&180&4620&6120&1320&60
\end{tabular}
\caption{Magnitude homology of the dodecahedral graph}
\label{TabMHDod}
\end{table}
\begin{table}[h]
\begin{tabular}{r|rrrrrrrrr}
&0&1&2&3&4&5&6&7&8\\ \hline
0&20\\
1&&60\\
2&&&60\\
3&&&120&60\\
4&&&&300&60\\
5&&&&20&240&60\\
6&&&&&660&240&60\\
7&&&&&&1380&240&60\\
8&&&&&&300&900&240&60
\end{tabular}
\caption{Magnitude homology of the Desargues graph}
\label{TabMHDes}
\end{table}
  \end{proof}
  \section{Geodetic ptolemaic graphs} \label{SecGeoPto}
  In this appendix, we prove that graphs that are both ptolemaic and geodetic are diagonal using algebraic Morse theory. Recall that a graph is ptolemaic if for every four vertices $(x,y,z,w)$, we have Ptolemy's inequality $$d(x,y)d(z,w)+d(y,z)d(x,w) \ge d(x,z)d(y,w),$$ and a graph is geodetic if there is only one shortest path between any two vertices. Here we use an equivalent characterization of ptolemaic graphs.
  \begin{prop}
  Let $G$ be a graph. The following are equivalent.
  \begin{enumerate}
  \item $G$ is ptolemaic.
  \item For every four vertices $(x,y,z,w)$ with $y\ne z$, $d(x,y)+d(y,z)=d(x,z)$ and $d(y,z)+d(z,w)=d(y,w)$, we have $d(x,y)+d(y,z)+d(z,w) = d(x,w)$.
  \item For every four vertices $(x,y,z,w)$ with $d(x,y)+d(y,z)=d(x,z)$, $d(y,z)+d(z,w)=d(y,w)$, and $d(x,y)=d(y,z)=1$, we have $d(x,y)+d(y,z)+d(z,w) = d(x,w)$.
  \item $G$ is chordal and distance-hereditary. (Recall that a graph is chordal if it does not contain an induced cycle of length at least $4$, and a graph is distance-hereditary if every induced path is a shortest path.)
  \end{enumerate}
  \end{prop}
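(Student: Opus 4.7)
The plan is to prove the cycle $(1) \Rightarrow (2) \Rightarrow (3) \Rightarrow (4) \Rightarrow (1)$, citing Howorka's classical characterization of ptolemaic graphs for $(4) \Rightarrow (1)$.

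For $(1) \Rightarrow (2)$, substitute the hypotheses $d(x,z) = d(x,y) + d(y,z)$ and $d(y,w) = d(y,z) + d(z,w)$ directly into Ptolemy's inequality. After expanding the right-hand side and cancelling the term $d(x,y)\,d(z,w)$ from both sides, one obtains
$$d(y,z)\,d(x,w) \geq d(y,z)\bigl(d(x,y) + d(y,z) + d(z,w)\bigr).$$
Since $y \neq z$ gives $d(y,z) \geq 1$, dividing and combining with the triangle inequality yields the desired equality. The implication $(2) \Rightarrow (3)$ is immediate, being the restriction to the case $d(x,y) = d(y,z) = 1$.

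For $(3) \Rightarrow (4)$, I would establish chordality and distance-heredity separately. For chordality, suppose $G$ contains an induced cycle $v_1, \ldots, v_n$ of length $n \geq 4$. Applying $(3)$ directly to $(v_1, v_2, v_3, v_4)$ disposes of $n = 4$ and $n = 5$, since $(3)$ forces $d(v_1,v_4)=3$ while the cycle gives $d(v_1,v_4) \leq 2$ in those cases. For $n \geq 6$, one iteratively applies $(3)$, first to tuples $(v_{k-2}, v_{k-1}, v_k, v_{k+1})$ to propagate $d(v_i, v_{i+3}) = 3$, and then to tuples $(v_1, v_2, v_3, v_j)$ for increasing $j$, successively forcing $d(v_1, v_j) = j-1$ along the cycle. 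This eventually contradicts $d(v_1, v_n) = 1$. Having chordality, the only remaining forbidden induced subgraph for distance-heredity is the gem (a $4$-path $1\text{-}2\text{-}3\text{-}4$ together with a universal fifth vertex), and applying $(3)$ to $(1,2,3,4)$ in the gem forces $d(1,4)=3$, contradicting the universal vertex's witness that $d(1,4)=2$. Finally, $(4) \Rightarrow (1)$ is Howorka's theorem.

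The main obstacle is the chordality argument in the $n \geq 6$ case, since a single application of $(3)$ is consistent with the cycle's own metric and no immediate contradiction arises. The iterative argument requires verifying at each step that the geodesic hypotheses $d(x,y)+d(y,z) = d(x,z)$ and $d(y,z)+d(z,w) = d(y,w)$ continue to hold; this is possible only because the earlier iterations have propagated equalities $d(v_i, v_j) = |i-j|$ for shorter gaps along the cycle, and one must carefully track which 4-tuples to use so that the required geodesic structure is already in hand when $(3)$ is invoked. The gem case and the initial implication $(1) \Rightarrow (2)$ are routine in comparison.
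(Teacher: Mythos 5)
Your proof is correct and its overall skeleton $(1)\Rightarrow(2)\Rightarrow(3)\Rightarrow(4)\Rightarrow(1)$ with Howorka's theorem closing the cycle is exactly what the paper does; $(1)\Rightarrow(2)$ and $(2)\Rightarrow(3)$ are verbatim the same. The difference is in $(3)\Rightarrow(4)$, where the paper isolates a single clean lemma: for any sequence $(x_0,\ldots,x_k)$ with $d(x_i,x_{i+1})=1$ and $d(x_i,x_{i+2})=2$ for all $i$, one has $d(x_0,x_k)=k$, proved by induction by applying $(3)$ to $(x_0,x_1,x_2,x_k)$. This lemma simultaneously handles chordality (an induced cycle of length $\ge 4$ satisfies the hypotheses yet has $d(x_0,x_k)\le 2$) and distance-heredity (an induced path satisfies the hypotheses, so it is geodesic), so no external characterization is needed. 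Your chordality argument is essentially the same induction described more loosely; be aware that the bookkeeping concern you raise disappears once you phrase it as the paper does, proving $d(v_i,v_{i+j})=j$ by induction on $j$, using $(3)$ on $(v_i,v_{i+1},v_{i+2},v_{i+j})$ with the two geodesic hypotheses supplied by the induced-cycle property and the induction hypothesis at $j-1$. Where you genuinely diverge is distance-heredity: you invoke the Bandelt--Mulder forbidden-subgraph characterization (distance-hereditary iff no induced house, hole, domino, or gem) and observe that chordality kills the house, hole, and domino, leaving only the gem, which $(3)$ dispatches. This is correct, but it imports a nontrivial theorem that the paper avoids; the paper's path lemma gives distance-heredity directly and keeps the proof self-contained. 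Your route buys a concrete picture (the gem) at the cost of an uncited external result, while the paper's route is shorter and unifies the two halves of $(4)$ under one lemma.
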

  \begin{proof}
  (1) $\imp$ (2): Suppose $G$ is ptolemaic and we have four vertices $(x,y,z,w)$ with $y\ne z$, $d(x,y)+d(y,z)=d(x,z)$ and $d(y,z)+d(z,w)=d(y,w)$.
  Expanding and simplifying Ptolemy's inequality, we get $d(x,w) \ge d(x,y)+d(y,z)+d(z,w)$. (This step uses $d(y,z)\ne 0$.)
  By triangle inequality, we have $d(x,w) \le d(x,y)+d(y,z)+d(z,w)$.
  So $d(x,w) = d(x,y)+d(y,z)+d(z,w)$.
  
  (2) $\imp$ (3): Obvious.
  
  (3) $\imp$ (4): Let us first prove a lemma.
  \begin{lemma} \label{LemmaPtoSub}
  Let $G$ be a graph satisfying (3).
  Let $k\ge 1$ and $(x_0,\ldots,x_k)$ be a sequence of vertices with $d(x_i,x_{i+1})=1$ and $d(x_i,x_{i+2})=2$ for all $i$. Then $d(x_0,x_k)=k$.
  \end{lemma}
  \begin{proof}
  Let us prove by induction.
  
  Induction base $k=1$ and $k=2$ are trivial.
  
  Assume $k\ge 3$ and that the result holds for $k^\p=2,3,\ldots,k-1$. Then we know $d(x_1,x_k) = k-1$ and $d(x_2,x_k) = k-2$ by induction hypothesis.
  Applying (3) to $(x_0,x_1,x_2,x_k)$, we get $d(x_0,x_k) = k$. This completes the induction step.
  \end{proof}
  
  Now we return to the proof of (3) $\imp$ (4).
  Suppose $G$ is not chordal and there is an induced cycle $(x_0,\ldots,x_k,x_{k+1}=x_0)$ with $k\ge 3$.
  Because this an induced cycle, we have $d(x_i,x_{i+2})=2$ for all $i$.
  By Lemma \ref{LemmaPtoSub}, $d(x_0,x_{k+1}) = k+1$. Contradiction. So $G$ is chordal. 
  That $G$ is distance-hereditary is immediate from Lemma \ref{LemmaPtoSub}.
  
  (4) $\iff$ (1): Proved by Howorka \cite{How81}.
  \end{proof}
  \begin{rmk}
  (3) is the characterization we use. (2) is a notion used in Leinster and Shulman \cite{LS17}. That (2) implies chordal is essentially known in op.~cit., Example 7.17.
  \end{rmk}
  
  \begin{thm}\label{ThmGeoPto}
  A geodetic ptolemaic graph is diagonal.
  \end{thm}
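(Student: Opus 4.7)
The plan is to replay the proof of Proposition \ref{PropTree} almost verbatim, replacing the appeal to tree-ness with the combination of geodeticity and ptolemaicity. First I would define $\sm : \{(u,v)\in V(G)\t V(G):d(u,v)\ge 2\} \to V(G)$ exactly as in the tree case, sending $(u,v)$ to the unique vertex $w$ with $d(u,w)=1$ and $d(u,v)=d(u,w)+d(w,v)$. Existence is trivial from $d(u,v)\ge 2$, and uniqueness is the geodetic hypothesis: the shortest $u$-$v$ path is unique, so there is a unique neighbor of $u$ on it. I would then write down the same matching rule $F$ and claim it is a valid diagonal matching rule that generates a Morse matching.

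The verification of validity has exactly two places where tree-ness is actually invoked in the proof of Proposition \ref{PropTree}. The first is showing that if $x_{k-1}=\sm(x_{k-2},x_k)$ then we cannot also have $x_{k-2}=\sm(x_{k-3},x_k)$ (since that would force $F(x_0,\ldots,x_{k-1})=\de$ via $x_{k-2}=\sm(x_{k-3},x_{k-1})$). In any geodetic graph this step works using only the triangle inequality: from $d(x_{k-3},x_k)=2+d(x_{k-1},x_k)$ one gets $d(x_{k-3},x_{k-1})=2=d(x_{k-3},x_{k-2})+d(x_{k-2},x_{k-1})$, so $x_{k-2}$ lies on the (unique) shortest $x_{k-3}$-$x_{k-1}$ path and thus equals $\sm(x_{k-3},x_{k-1})$. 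The second place is showing $x_{k-1}\ne \sm(x_{k-2},\sm(x_{k-1},x_k))$: setting $v:=\sm(x_{k-1},x_k)$, the hypothesis gives $d(x_{k-2},x_{k-1})=d(x_{k-1},v)=1$, $d(x_{k-2},v)=2$, and $d(x_{k-1},x_k)=1+d(v,x_k)$. Here I would invoke characterization (3) of ptolemaic graphs from the preceding proposition, applied to the tuple $(x_{k-2},x_{k-1},v,x_k)$, which yields $d(x_{k-2},x_k)=d(x_{k-2},x_{k-1})+d(x_{k-1},x_k)$; by geodeticity this forces $x_{k-1}=\sm(x_{k-2},x_k)$, contradicting our assumption.

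The matching rule is diagonal for the same reason as in the tree case. For the Morse condition I would copy the tree argument word for word: from Corollary \ref{CoroNonMorseDiagDA} we get $d(a_{1,d_1-1},a_{1,d_1})=1$, the edges $a_1\to b_1$ and $b_1\to a_2$ force $a_{1,d_1}=\sm(a_{1,d_1-1},a_{1,d_1+1})$ and $a_{2,c_1+1}=\sm(a_{2,c_1},a_{2,c_1+2})$, and with $c_1=d_1-1$ (Lemma \ref{LemmaNonMorseDiag}) together with the coordinate-matching $a_{1,j}=a_{2,j}$ for $j\le d_1-1$ and $j\ge d_1+1$, uniqueness of $\sm$ (geodeticity) collapses $a_2=a_1$, contradicting the existence of the cycle. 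Corollary \ref{CoroDiagMatRuleImplyGraph} then concludes that $G$ is diagonal.

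I do not anticipate a real obstacle: the whole thing is a targeted reuse of the tree proof, and the main content is simply recognizing where each of the two hypotheses enters. The only care needed is to make sure that the pieces of the tree argument that looked like they used ``uniqueness of paths'' really only use uniqueness of the one-step continuation $\sm$ (which is geodetic) and the ability to splice two unit-step extensions of a shortest path (which is ptolemaic property~(3)).
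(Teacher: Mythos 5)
Your proposal is correct and matches the paper's proof essentially line for line: the paper also defines $\sm$ via geodeticity, reuses the tree matching rule verbatim, invokes ptolemaic characterization~(3) applied to $(x_{k-2},x_{k-1},\sm(x_{k-1},x_k),x_k)$ at exactly the point you identified, and finishes the Morse-ness check by the same $a_2=a_1$ contradiction using Corollary~\ref{CoroNonMorseDiagDA} and Lemma~\ref{LemmaNonMorseDiag}. The one thing you spell out that the paper leaves implicit is the triangle-inequality computation giving $d(x_{k-3},x_{k-1})=2$ in validity step~(1), which is indeed all that is needed there.
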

  \begin{proof}
  Let $G$ be a geodetic ptolemaic graph.
We define a function $\sm: \{(u,v) \in V(G) \t V(G) : d(u,v)\ge 2\} \to V(G)$ that maps $(u,v)$ to the unique vertex $w$ with $d(u,w)=1$ and $d(u,v) = d(u,w)+d(v,w)$.
  Existence and uniqueness follows from that $G$ is geodetic.
  
  Let us describe the matching rule.
  Fix a sequence $(x_0,\ldots,x_k)$ with unmatched prefix $(x_0,\ldots,x_{k-1})$.
  \begin{enumerate}
  \item If $k\ge 2$ and $x_{k-1} = \sm(x_{k-2},x_k)$, then $F(x_0,\ldots,x_k) = \de$.
  \item If $k\ge 1$, $d(x_{k-1}, x_k)\ge 2$, and not $(k\ge 2 \land x_{k-1} = \sm(x_{k-2},x_k))$, then $F(x_0,\ldots,x_k) = \iota(\sm(x_{k-1},x_k))$.
  \end{enumerate}
  
  Let us prove that $F$ is a valid matching rule. 
  Fix a sequence $(x_0,\ldots,x_k)$ with unmatched prefix $(x_0,\ldots,x_{k-1})$.
  \begin{enumerate}
  \item Suppose $k\ge 2$ and $x_{k-1} = \sm(x_{k-2},x_k)$. Clearly $d(x_{k-2},x_k)\ge 2$. Also, if $k\ge 3$ and $x_{k-2} = \sm(x_{k-3},x_k)$, then $x_{k-2} = \sm(x_{k-3}, x_{k-1})$ and $F(x_0,\ldots,x_{k-1}) = \de$, which is not true. So we have $\lnot (k\ge 3 \land x_{k-2} = \sm(x_{k-3},x_k))$.
  So $F(x_0,\ldots,x_{k-2},x_k) = \iota(\sm(x_{k-2},x_k)) = \iota(x_{k-1})$.
  \item Suppose $k\ge 1$, $d(x_{k-1}, x_k)\ge 2$, and not $(k\ge 2 \land x_{k-1} = \sm(x_{k-2},x_k))$.
  Because $d(x_{k-1},\sm(x_{k-1},x_k))=1$, we have $F(x_0,\ldots,x_{k-1},\sm(x_{k-1},x_k))\ne \iota(*)$.
  If $F(x_0,\ldots,x_{k-1},\sm(x_{k-1},x_k)) = \de$, then $k\ge 2$ and $x_{k-1} = \sm(x_{k-2}, \sm(x_{k-1},x_k))$.
  Applying ptolemaic characterization (3) to $(x_{k-2},x_{k-1},\sm(x_{k-1},x_k),x_k)$, we get $x_{k-1} = \sm(x_{k-2}, x_k)$, which is not true.
  So $F(x_0,\ldots,x_{k-1},\sm(x_{k-1},x_k)) = \ep$ and thus $F(x_0,\ldots,x_{k-1},\sm(x_{k-1},x_k), x_k) = \de$.
  \end{enumerate}
  
  It is easy to see that $F$ is a diagonal matching rule.
  
  Let $M$ be the prefix matching generated by $F$. Let us prove that $M$ is a Morse matching. Work in the setting of Lemma \ref{LemmaNonMorse}. Suppose in $\Ga_{\MC_{*,*}(G)}^M$ there is a directed cycle $a_1 \to b_1 \to \cdots \to b_p \to a_{p+1}=a_1$. 
  Define $d_i$, $c_i$, $u_i$ accordingly.
  
  By Corollary \ref{CoroNonMorseDiagDA}, we have $d(a_{1,d_1-1}, a_{1,d_1}) = 1$.
  Because of the edge $a_1 \to b_1$, we have $d(a_{1,d_1-1}, a_{1,d_1}) + d(a_{1,d_1}, a_{1,d_1+1}) = d(a_{1,d_1-1}, a_{1,d_1+1})$.
  So $a_{1,d_1} = \sm(a_{1,d_1-1}, a_{1,d_1+1})$.
  Also, by the edge $b_1 \to a_2$, we have $a_{2,c_1+1} = \sm(a_{2,c_1}, a_{2,c_1+2})$.
  By Lemma \ref{LemmaNonMorseDiag}, we have $c_1 = d_1-1$.
  Also, we have $a_{1,j} = b_{1,j} = a_{2,j}$ for $j\le d_1-1$, and $a_{1,j} = b_{1,j-1} = a_{2,j}$ for $j\ge d_1+1$.
  So $a_{2,d_1} = a_{1,d_1}$, and therefore $a_2 = a_1$.
  Then edges $a_1 \to b_1$ and $b_1\to a_2$ cannot both exist in $\Ga_{\MC_{*,*}(G)}^M$. Contradiction.
  So $M$ is a Morse matching.
  
  By Corollary \ref{CoroDiagMatRuleImplyGraph}, the graph $G$ is diagonal.
  \end{proof}
  
  It may sound like that Theorem \ref{ThmGeoPto} gives new diagonal graphs. However, by Kay and Chartrand \cite{KC65}, a graph that is both ptolemaic and weakly geodetic (a notion weaker than geodetic) is a block graph.
  Recall that a block graph is a graph whose every biconnected component is a clique.
  Every (connected) block graph can be constructed by the following process.
  \begin{enumerate}
  \item A single vertex is a block graph.
  \item Suppose $Y = X \cup K$, where $X$ is a block graph, $K$ is a clique, and $X\cap K$ is a single vertex. Then $Y$ is a block graph.
  \end{enumerate}
  So the diagonality of block graphs follows from Meyer-Vietoris (Hepworth and Willerton \cite{HW17} Theorem 29).
  In other words, Theorem \ref{ThmGeoPto} is a known result.
  Nevertheless, the proof using algebraic Morse theory is new and might be of interest.
  
  \bibliographystyle{alpha}
  \bibliography{ref}

\begin{thebibliography}{How81}

\bibitem[How81]{How81}
E.~Howorka.
\newblock A characterization of ptolemaic graphs.
\newblock {\em Journal of Graph Theory}, 5(3):323--331, 1981.

\bibitem[HW17]{HW17}
R.~{Hepworth} and S.~{Willerton}.
\newblock {Categorifying the magnitude of a graph}.
\newblock {\em Homology, Homotopy and Applications}, 19(2):31--60, 2017.

\bibitem[J{\"o}l05]{Jol05}
M.~J{\"o}llenbeck.
\newblock {\em Algebraic discrete Morse theory and applications to commutative
  algebra}.
\newblock PhD thesis, Philipps-Universit{\"a}t Marburg, 2005.

\bibitem[KC65]{KC65}
D.~C. Kay and G.~Chartrand.
\newblock A characterization of certain ptolemaic graphs.
\newblock {\em Canad. J. Math}, 17:342--346, 1965.

\bibitem[LC12]{LC12}
T.~Leinster and C.~A. Cobbold.
\newblock Measuring diversity: the importance of species similarity.
\newblock {\em Ecology}, 93(3):477--489, 2012.

\bibitem[Lei13]{Lei13}
T.~Leinster.
\newblock The magnitude of metric spaces.
\newblock {\em Documenta Mathematica}, 18:857--905, 2013.

\bibitem[Lei17]{Lei17}
T.~Leinster.
\newblock The magnitude of a graph.
\newblock {\em Mathematical Proceedings of the Cambridge Philosophical
  Society}, page 1–18, 2017.

\bibitem[LS17]{LS17}
T.~Leinster and M.~Shulman.
\newblock Magnitude homology of enriched categories and metric spaces.
\newblock {\em arXiv preprint arXiv:1711.00802}, 2017.

\bibitem[LV16]{LV16}
L.~Lampret and A.~Vavpeti{\v{c}}.
\newblock {(Co)homology} of lie algebras via algebraic {Morse} theory.
\newblock {\em Journal of Algebra}, 463:254--277, 2016.

\bibitem[Sk{\"o}06]{Sko06}
E.~Sk{\"o}ldberg.
\newblock Morse theory from an algebraic viewpoint.
\newblock {\em Transactions of the American Mathematical Society},
  358(1):115--129, 2006.

\end{thebibliography}
\end{document}